\theoremstyle{plain}
\newtheorem{theorem}{Theorem}
\newtheorem{lemma}[theorem]{Lemma}
\newtheorem{corollary}[theorem]{Corollary}
\theoremstyle{remark}
\newtheorem*{remark}{Remark} % remarks unnumbered
\def\E{{\rm E}} % Expectation
\def\Var{{\rm Var}} % Variance
\def\T{\textsf{T}} % Transpose
\def\MR{Mihailovi\'c and Rajkovi\'c}
\def\@biblabel#1{\hspace*{-\labelsep}}
\title{Parrondo games with spatial dependence}
\author{S. N. Ethier\\
\begin{small}University of Utah\end{small}\\
\begin{small}Department of Mathematics\end{small}\\
\begin{small}155 S. 1400 E., JWB 233\end{small}\\
\begin{small}Salt Lake City, UT 84112, USA\end{small}\\
\begin{small}e-mail: \url{ethier@math.utah.edu}\end{small}
\and
Jiyeon Lee\\
\begin{small}Yeungnam University\end{small}\\
\begin{small}Department of Statistics\end{small}\\
\begin{small}214-1 Daedong, Kyeongsan\end{small}\\
\begin{small}Kyeongbuk 712-749, South Korea\end{small}\\
\begin{small}e-mail: \url{leejy@yu.ac.kr}\end{small}}
\date{}
\begin{document}
\maketitle
\begin{abstract}
Toral introduced so-called cooperative Parrondo games, in which there are $N\ge3$ players arranged in a circle.  At each turn one player is randomly chosen to play.  He plays either game $A$ or game $B$.  Game $A$ results in a win or loss of one unit based on the toss of a fair coin.  Game $B$ results in a win or loss of one unit based on the toss of a biased coin, with the amount of the bias depending on whether none, one, or two of the player's two nearest neighbors have won their most recent games.  Game $A$ is fair, so the games are said to exhibit the Parrondo effect if game $B$ is losing or fair and the random mixture $(1/2)(A+B)$ is winning.  With the parameter space being the unit cube, we investigate the region in which the Parrondo effect appears.  Explicit formulas can be found if $3\le N\le 6$ and exact computations can be carried out if $7\le N\le 19$, at least.  We provide numerical evidence suggesting that the Parrondo region has nonzero volume in the limit as $N\to\infty$.
\medskip\par
\noindent \textit{Keywords}: Parrondo's paradox, cooperative Parrondo games, Markov chain, stationary distribution, equivalence class, dihedral group, strong law of large numbers.
\end{abstract}

\section{Introduction}

Toral \cite{T} introduced what he called \textit{cooperative Parrondo games}, in which there are $N\ge3$ players labeled from 1 to $N$ and arranged in a circle in clockwise order.  At each turn, one player is chosen at random to play.  Call him player $i$.  He plays either game $A$ or game $B$, depending on the strategy.  In game $A$ he tosses a $p$-coin (i.e., $p$ is the probability of heads).  In game $B$, he tosses a $p_0$-coin if his neighbors $i-1$ and $i+1$ are both losers, a $p_1$-coin if $i-1$ is a loser and $i+1$ is a winner, a $p_2$-coin if $i-1$ is a winner and $i+1$ is a loser, and a $p_3$-coin if $i-1$ and $i+1$ are both winners.  (Because of the circular arrangement, player 0 is player $N$ and player $N+1$ is player 1.) A player's status as winner or loser depends on the result of his most recent game.  Of course, the player of either game wins one unit with heads and loses one unit with tails.  To initialize game $B$, we could assume that each player tosses a fair coin to establish his status as winner or loser; alternatively, we could specify an arbitrary initial distribution.  But because we are concerned with long-term behavior, the initial distribution is usually unimportant.  Under these assumptions, the model has an integer parameter $N$ and five probability parameters $p$, $p_0$, $p_1$, $p_2$, and $p_3$.

Toral used computer simulation to show that, with $N=50$, 100, or 200, $p=1/2$, $p_0=1$, $p_1=p_2=4/25$, and $p_3=7/10$, game $A$ is fair, game $B$ is losing, and the random mixture $C:=(1/2)(A+B)$ (toss a fair coin to determine which game to play) is winning, providing a new example of \textit{Parrondo's paradox} (Harmer and Abbott \cite{HA}, Abbott \cite{A}).  \MR\ \cite{MR} studied the case $N=3$ analytically and found that the Parrondo effect is not present for Toral's choice of the probability parameters but is present for other choices.  They also used analytical methods to examine the cases $4\le N\le 12$ assuming Toral's choice of the probability parameters.  Xie et al.\ \cite{Xetal} studied the case $N=4$ analytically but their game $A$ differs from Toral's, so their conclusions about the Parrondo effect do not apply here.  Our interest  is in the presence of the Parrondo effect for $N\ge3$ and arbitrary choices of the probability parameters.  We are willing to assume that $p=1/2$, so that game $A$ is fair, and that $p_1=p_2$ as others have assumed, so that the bias of the coin tossed in game $B$ depends only on the \textit{number} of winners among the two nearest neighbors.  We also assume that the random mixture of game $A$ and game $B$ is the equally weighted one, denoted above by $C$.  For fixed $N\ge3$, this still leaves three free probability parameters, $p_0$, $p_1$, and $p_3$, so our parameter space is the unit cube.

For $3\le N\le 6$, explicit formulas can be derived for $\mu_B$ and $\mu_C$, the mean profits per turn to the ensemble of $N$ players always playing game $B$ and always playing game $C$, respectively.  For $7\le N\le 19$ (and perhaps slightly larger $N$), an algorithm can be developed that generates exact values for $\mu_B$ and $\mu_C$, once the parameters $p_0$, $p_1$, and $p_3$ are specified.  By analyzing three choices of the parameter vector $(p_0,p_1,p_3)$ (including Toral's), we provide numerical evidence, but not a proof, that $\mu_B$ and $\mu_C$ converge as $N\to\infty$ and that the Parrondo effect (i.e., $\mu_B\le0$ and $\mu_C>0$) persists for all $N$ sufficiently large for a set of parameter vectors having nonzero volume.

Incidentally, there is also the concept of an anti-Parrondo effect (i.e., $\mu_B\ge0$ and $\mu_C<0$), and we show that there is a symmetry between the two concepts, so that the Parrondo region of the parameter space has the same volume as the anti-Parrondo region.  This should come as no surprise.  Indeed, as Harmer and Abbott \cite{HA} put it, ``In a practical sense this is like changing the observer's perspective of the games --- i.e.\ whether from the player's or the bank's point of view.''

In addition to the intrinsic appeal of creating a winning game from two fair or losing games, Parrondo's games have physical significance.  They were originally devised in 1996 by J. M. R. Parrondo as a pedagogical model of the flashing Brownian ratchet of Ajdari and Prost \cite{AD}.  Early work focussed on capital-dependent (Harmer and Abbott \cite{HA99}) and history-dependent (Parrondo, Harmer, and Abbott \cite{PHA}) games for a single player.  Multi-player games were introduced by Toral \cite{T,T2}, including not only the spatially dependent games studied here but also a model in which the role of the fair game $A$ is played by a forced transfer of one unit of wealth from one randomly chosen player to another, while game $B$ is the original capital- or history-dependent one.  This redistribution-of-wealth model was studied recently by the present authors \cite{EL12}.  That model also motivated a model of Xie et al. \cite{Xetal}, in which game $B$ is as in the spatially dependent Parrondo games as above, while game $A$ amounts to a forced transfer of one unit of wealth from one randomly chosen player to a randomly chosen nearest neighbor.  Other multi-player models include the synchronous spatial model of Mihailovi\'c and Rajkovi\'c \cite{MR2}; a two-dimensional spatial model by the same authors \cite{MR3}; a model of Amengual et al.\ \cite{AMCT} in which win probabilities depend on the number of winners (see also Arizmendi \cite{Ar}); and a model of Wang et al. \cite{Wetal} with dependence on capital parity.  As the literature on Parrondo's paradox approaches 200 papers, perhaps the easiest way to get an overview of the subject is to read the survey papers \cite{HA,A} cited above.

\section{The Markov chain and its reduction}\label{MC}

The Markov chain formalized by \MR\ \cite{MR} keeps track of the status (loser or winner, 0 or 1) of each of the $N\ge3$ players. Its state space is the product space
$$
\Sigma:=\{\bm x=(x_1,x_2,\ldots,x_N): x_i\in\{0,1\}{\rm\ for\ }i=1,\ldots,N\}=\{0,1\}^N
$$
with $2^N$ states.  With the help of some notation, we can specify the one-step transition matrix.  Let $m_i(\bm x):=2x_{i-1}+x_{i+1}$, or, in other words, $m_i(\bm x)$ is the integer (0, 1, 2, or 3) whose binary representation is $(x_{i-1}\,x_{i+1})_2$.  Of course $x_0:=x_N$ and $x_{N+1}:=x_1$.  Also, let $\bm x^i$ be the element of $\Sigma$ equal to $\bm x$ except at the $i$th component.  For example, $\bm x^1:=(1-x_1,x_2,x_3,\ldots,x_N)$.

The one-step transition matrix $\bm P$ for this Markov chain depends not only on $N$ but on four parameters, $p_0$, $p_1$, $p_2$, and $p_3$, which we assume satisfy $0<p_m<1$ for $m=0,1,2,3$.  (This rules out Toral's choice of the probability parameters, at least for now, but we will return to this point in Section~\ref{reducible}.  We do not assume that $p_1=p_2$ until Section~\ref{Parrondo}.)  It has the form
\begin{equation}\label{x to x^i}
P(\bm x,\bm x^i):=\begin{cases}N^{-1}p_{m_i(\bm x)}&\text{if $x_i=0$,}\\N^{-1}q_{m_i(\bm x)}&\text{if $x_i=1$,}\end{cases}\qquad i=1,\ldots,N,\;\bm x\in\Sigma,
\end{equation}
and
\begin{equation}\label{x to x}
P(\bm x,\bm x):=N^{-1}\bigg(\sum_{i:x_i=0}q_{m_i(\bm x)}+\sum_{i:x_i=1}p_{m_i(\bm x)}\bigg),\qquad \bm x\in\Sigma,
\end{equation}
where $q_m:=1-p_m$ for $m=0,1,2,3$ and empty sums are 0.  The Markov chain is irreducible and aperiodic.

The description of the model suggests that its long-term behavior should be invariant under rotation (and, if $p_1=p_2$, reflection) of the $N$ players.  In order to maximize the value of $N$ for which exact computations are feasible, we will use this idea to effectively reduce the size of the state space.  Our first result describes how this is done.  We omit the reasonably straightforward proof.

\begin{lemma}
Let $E$ be a finite set, fix $N\ge2$, let $G$ be a subgroup of the group of permutations of $(1,2,\ldots,N)$, and let $S$ be a subset of the product space $E^N:=E\times\cdots\times E$ with the property that $\bm x:=(x_1,\ldots,x_N)\in S$ implies $\bm x_\sigma:=(x_{\sigma(1)},\ldots,x_{\sigma(N)})\in S$ for all $\sigma\in G$.  Let $\bm P$ be the one-step transition matrix for an irreducible Markov chain in $S$, and let $\bm\pi$ be its unique stationary distribution.  Assume that $P(\bm x_\sigma,\bm y_\sigma)=P(\bm x,\bm y)$ for all $\sigma\in G$ and $\bm x,\bm y\in S$.  Then $\pi(\bm x_\sigma)=\pi(\bm x)$ for all $\sigma\in G$ and $\bm x\in S$.

Let us say that $\bm x\in S$ is equivalent to $\bm y\in S$ (written $\bm x\sim\bm y$) if there exists $\sigma\in G$ such that $\bm y=\bm x_\sigma$, and let us denote the equivalence class containing $\bm x$ by $[\bm x]$.  Then, in addition, $\bm P$ induces a one-step transition matrix $\bar{\bm P}$ for an irreducible Markov chain in the quotient set (i.e., the set of equivalence classes) $S/$$\sim$ defined by the formula
$$
\bar{P}([\bm x],[\bm y]):=\sum_{\bm y':\bm y'\sim\bm y}P(\bm x,\bm y')=\sum_{\sigma\in G:\bm y_\sigma\;{\rm distinct}}P(\bm x,\bm y_\sigma),
$$
the second sum extending over only those $\sigma\in G$ for which the various $\bm y_\sigma$ are distinct.  Furthermore, if $\bar{\bm\pi}$ is the unique stationary distribution for $\bar{\bm P}$, then $\bm\pi$ is given by $\pi(\bm x)=\bar{\pi}([\bm x])/|[\bm x]|$, where $|[\bm x]|$ denotes the cardinality of the equivalence class $[\bm x]$.
\end{lemma}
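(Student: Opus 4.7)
The lemma has three assertions --- $G$-invariance of $\bm\pi$, well-definedness and irreducibility of $\bar{\bm P}$, and the quotient formula for the stationary distribution --- and I would dispatch them in that order, invoking uniqueness of the stationary distribution twice. For the first, fix $\sigma\in G$ and set $\pi'(\bm x):=\pi(\bm x_\sigma)$. Because $\bm x\mapsto\bm x_\sigma$ is a bijection of $S$, $\pi'$ is a probability distribution; I would verify stationarity via
$$\sum_{\bm x\in S}\pi'(\bm x)P(\bm x,\bm y)=\sum_{\bm x\in S}\pi(\bm x_\sigma)P(\bm x_\sigma,\bm y_\sigma)=\sum_{\bm z\in S}\pi(\bm z)P(\bm z,\bm y_\sigma)=\pi(\bm y_\sigma)=\pi'(\bm y),$$
the first equality using the $G$-invariance of $\bm P$, the second the substitution $\bm z=\bm x_\sigma$, and the third the stationarity of $\bm\pi$. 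Uniqueness then forces $\pi'=\bm\pi$, i.e.\ $\pi(\bm x_\sigma)=\pi(\bm x)$.

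\emph{Well-definedness and irreducibility of $\bar{\bm P}$.} To see that $\bar P([\bm x],[\bm y])$ is independent of the representative of $[\bm x]$, take $\tau\in G$ and note that $P(\bm x_\tau,\bm y')=P(\bm x,\bm y'_{\tau^{-1}})$ by invariance; since $\bm y'\mapsto\bm y'_{\tau^{-1}}$ is a bijection of $[\bm y]$ onto itself, the sum over $\bm y'\in[\bm y]$ is the same for $\bm x$ and $\bm x_\tau$. For irreducibility, a straightforward induction on $n$ (using the well-definedness just established to move the equivalence-class sum past the intermediate step) shows $\bar P^n([\bm x],[\bm y])=\sum_{\bm y'\in[\bm y]}P^n(\bm x,\bm y')$, so any $n$ with $P^n(\bm x,\bm y)>0$ also makes $\bar P^n([\bm x],[\bm y])>0$.

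\emph{The stationary distribution.} Set $\tilde\pi(\bm x):=\bar\pi([\bm x])/|[\bm x]|$. The main step is the identity
$$\sum_{\bm x'\in[\bm x]}P(\bm x',\bm y)=\frac{|[\bm x]|}{|[\bm y]|}\bar P([\bm x],[\bm y]),\qquad \bm x,\bm y\in S,$$
which I would derive from the orbit--stabilizer theorem: summing $P(\bm x_\tau,\bm y)=P(\bm x,\bm y_{\tau^{-1}})$ over $\tau\in G$ and using that the fibers of $\tau\mapsto\bm x_\tau$ onto $[\bm x]$ each have size $|G|/|[\bm x]|$ (and similarly for $\bm y$, after reindexing $\tau\mapsto\tau^{-1}$) produces the factor $|[\bm x]|/|[\bm y]|$. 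Granting this, grouping by equivalence class and invoking stationarity of $\bar{\bm\pi}$ gives
$$\sum_{\bm x\in S}\tilde\pi(\bm x)P(\bm x,\bm y)=\sum_{[\bm x]}\frac{\bar\pi([\bm x])}{|[\bm x]|}\sum_{\bm x'\in[\bm x]}P(\bm x',\bm y)=\frac{1}{|[\bm y]|}\sum_{[\bm x]}\bar\pi([\bm x])\bar P([\bm x],[\bm y])=\frac{\bar\pi([\bm y])}{|[\bm y]|}=\tilde\pi(\bm y),$$
and since $\sum_{\bm x}\tilde\pi(\bm x)=\sum_{[\bm x]}\bar\pi([\bm x])=1$, uniqueness yields $\tilde\pi=\bm\pi$. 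The main obstacle is the orbit--stabilizer bookkeeping in this last step; once the key identity is in place, the remaining calculations are a routine rearrangement.
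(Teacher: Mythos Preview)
Your argument is correct. The paper itself omits the proof, calling it ``reasonably straightforward,'' so there is nothing to compare against directly; your write-up supplies precisely the details the authors left out. The invariance argument via uniqueness, the well-definedness check for $\bar{\bm P}$, and the induction giving $\bar P^n([\bm x],[\bm y])=\sum_{\bm y'\in[\bm y]}P^n(\bm x,\bm y')$ are all sound.

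One remark on the final step: your orbit--stabilizer identity is correct, but you can avoid it entirely by reversing the direction of the argument. Define $\hat\pi([\bm x]):=\sum_{\bm x'\in[\bm x]}\pi(\bm x')=|[\bm x]|\,\pi(\bm x)$ (the second equality using the $G$-invariance of $\bm\pi$ already established). Then, using well-definedness of $\bar{\bm P}$ to write $\bar P([\bm x],[\bm y])=\sum_{\bm y'\in[\bm y]}P(\bm x',\bm y')$ for each $\bm x'\in[\bm x]$,
\[
\sum_{[\bm x]}\hat\pi([\bm x])\,\bar P([\bm x],[\bm y])
=\sum_{\bm x'\in S}\pi(\bm x')\sum_{\bm y'\in[\bm y]}P(\bm x',\bm y')
=\sum_{\bm y'\in[\bm y]}\pi(\bm y')=\hat\pi([\bm y]),
\]
so $\hat\pi$ is stationary for $\bar{\bm P}$ and hence equals $\bar{\bm\pi}$ by uniqueness on the quotient. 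This gives $\pi(\bm x)=\bar\pi([\bm x])/|[\bm x]|$ with no bookkeeping about stabilizer sizes. Either route is fine; yours just does a little more work than necessary.
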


\begin{remark}
The case $S=E^N$ is of primary interest, but examples in which $S$ is a proper subset of $E^N$ appear in Section~\ref{reducible}.

\end{remark}

The lemma applies to our Markov chain if $G$ is the subgroup of \textit{cyclic permutations} (or rotations) of $(1,2,\ldots,N)$, that is, the group generated by
\begin{equation}\label{cyclic}
(\sigma(1),\sigma(2),\ldots,\sigma(N)):=(2,3,\ldots,N,1).
\end{equation}
Indeed, for any cyclic permutation $\sigma$,
\begin{eqnarray}\label{invariance_property}
P(\bm x_\sigma,(\bm x^i)_\sigma)=P(\bm x_\sigma,(\bm x_\sigma)^{\sigma^{-1}(i)})&=&\begin{cases}N^{-1}p_{m_{\sigma^{-1}(i)}(\bm x_\sigma)}&\text{if $(\bm x_\sigma)_{\sigma^{-1}(i)}=0$}\\
N^{-1}q_{m_{\sigma^{-1}(i)}(\bm x_\sigma)}&\text{if $(\bm x_\sigma)_{\sigma^{-1}(i)}=1$}\end{cases}\nonumber\\
&=&\begin{cases}N^{-1}p_{m_i(\bm x)}&\text{if $x_i=0$}\\N^{-1}q_{m_i(\bm x)}&\text{if $x_i=1$}\end{cases}\nonumber\\
&=&P(\bm x,\bm x^i)
\end{eqnarray}
for $i=1,\ldots,N$ and all $\bm x\in\Sigma$.  If $p_1=p_2$, then (\ref{invariance_property}) also applies to the \textit{order-reversing permutation} (or reflection) of $(1,2,\ldots,N)$,
\begin{equation}\label{reverse}
(\sigma(1),\sigma(2),\ldots,\sigma(N)):=(N,N-1,\ldots,2,1);
\end{equation}
note that the third equality uses $p_1=p_2$.  In this case $G$ is the group generated by (\ref{cyclic}) and (\ref{reverse}) and is known as the \textit{dihedral group} of order $2N$.

The practical effect of this is that we can reduce the size of the state space (namely, $2^N$) to what we will call its \textit{effective size}, which is simply the number of equivalence classes.  For example, if $N=3$, there are eight states and four equivalence classes, namely
$$
0=\{000\},\quad
1=\{001,010,100\},\quad
2=\{011,101,110\},\quad
3=\{111\}.
$$
Notice that we label equivalence classes by the number of 1s each element has.
If $N=4$, there are 16 states and six equivalence classes, namely
\begin{eqnarray*}
0&=&\{0000\},\\
1&=&\{0001,0010,0100,1000\},\\
2&=&\{0011,0110,1001,1100\},\\
2'&=&\{0101,1010\},\\
3&=&\{0111,1011,1101,1110\},\\
4&=&\{1111\}.
\end{eqnarray*}
If $N=6$, there are 64 states and 14 equivalence classes.  A more concise notation in this case is
$$
[0]_1/[1]_6/[3]_6,[5]_6,[9]_3/[7]_6,[11]_6,[13]_6,[21]_2/[15]_6,[23]_6,[27]_3/[31]_6/[63]_1.
$$
Here each equivalence class is described by its least element in decimal form.  Subscripts indicate equivalence class sizes.  Equivalence classes are ordered first by the number of 1s each element has in binary form (the $/$ symbol separates different numbers of 1s), and second by the least element.  If in addition $p_1=p_2$, then, using the order-reversing permutation, equivalence classes $[11]_6$ and $[13]_6$ coalesce and are replaced by $[11]_{12}$, so there are only 13 equivalence classes in that case.

The number of equivalence classes with $G$ being the group of cyclic permutations follows the sequence A000031 in the \textit{The On-Line Encyclopedia of Integer Sequences} (\url{http://oeis.org/}), described as the number of necklaces with $N$ beads of two colors when turning over is not allowed.  There is an explicit formula in terms of Euler's phi-function.  If $p_1=p_2$, we can reverse the order of the players, and the number of equivalence classes with $G$ being the dihedral group follows the sequence A000029 in the \textit{OEIS}, described as the number of necklaces with $N$ beads of two colors when turning over is allowed.  Again there is an explicit formula.  See Table~\ref{sequences}.

\begin{table}[ht]
\caption{\label{sequences}The size and effective size of the state space when there are $N$ players.\medskip}
\catcode`@=\active \def@{\hphantom{0}}
\begin{center}
\begin{footnotesize}
\begin{tabular}{cccc}
\noalign{\smallskip}
\hline
\noalign{\smallskip}
number of   &  size of   &  effective size  &  effective size \\
players & @state space@ & not assuming  & assuming  \\
 $N$ & $2^N$ & $p_1=p_2$ & $p_1=p_2$ \\
\noalign{\smallskip}
\hline
\noalign{\smallskip}
@3 & @@@@@@8 & @@@@4 & @@@@4 \\
@4 & @@@@@16 & @@@@6 & @@@@6 \\
@5 & @@@@@32 & @@@@8 & @@@@8 \\
@6 & @@@@@64 & @@@14 & @@@13 \\
@7 & @@@@128 & @@@20 & @@@18 \\
@8 & @@@@256 & @@@36 & @@@30 \\
@9 & @@@@512 & @@@60 & @@@46 \\
10 & @@@1024 & @@108 & @@@78 \\
11 & @@@2048 & @@188 & @@126 \\
12 & @@@4096 & @@352 & @@224 \\
13 & @@@8192 & @@632 & @@380 \\
14 & @@16384 & @1182 & @@687 \\
15 & @@32768 & @2192 & @1224 \\
16 & @@65536 & @4116 & @2250 \\
17 & @131072 & @7712 & @4112 \\
18 & @262144 & 14602 & @7685 \\
19 & @524288 & 27596 & 14310 \\
20 & 1048576 & 52488 & 27012 \\
\noalign{\smallskip}
\hline
\end{tabular}
\end{footnotesize}
\end{center}
\end{table}

Elements of $\Sigma$ are most naturally ordered by regarding them as the binary representations of the integers $0,1,\ldots,2^N-1$.  Elements of $\Sigma/$$\sim$ have two natural orderings, one of which was described above in connection with the case $N=6$.  Another approach, which is computationally simpler (and adopted in the Appendix), is to order equivalence classes simply by the least element.

To illustrate the one-step transition matrix on the reduced state space, consider the case $N=3$.  The $8\times8$ one-step transition matrix $\bm P$ is equal to the transpose of (9) in \MR\ \cite{MR}, whereas
\begin{equation}\label{Pbar_N=3}
\setlength{\arraycolsep}{1mm}
\bar{\bm P}={1\over3}\left(\begin{array}{cccc}
3 q_0 & 3 p_0 & 0 & 0 \\
q_0 & p_0 + q_1 + q_2 & p_1 + p_2 & 0 \\
0 & q_1 + q_2 & p_1 + p_2 + q_3 & p_3 \\
0 & 0 & 3 q_3 & 3 p_3
\end{array}\right).
\end{equation}
Consider also the case $N=4$.  The $16\times16$ one-step transition matrix $\bm P$ is equal to (12) in Xie et al.\ \cite{Xetal}, whereas
\begin{equation}\label{Pbar_N=4}
\setlength{\arraycolsep}{1mm}
\bar{\bm P}={1\over4}\left(\begin{array}{cccccc}
4 q_0 & 4 p_0 & 0 & 0 & 0 & 0 \\
q_0 & 1 + q_1 + q_2 & p_1 + p_2 & p_0 & 0 & 0 \\
0 & q_1 + q_2 & 2 & 0 & p_1 + p_2 & 0 \\
0 & 2 q_0 & 0 & 2 (p_0 + q_3) & 2 p_3 & 0 \\
0 & 0 & q_1 + q_2 & q_3 & 1 + p_1 + p_2 & p_3 \\
0 & 0 & 0 & 0 & 4 q_3 & 4 p_3
\end{array}\right),
\end{equation}
where rows and columns are labeled by $0,1,2,2',3,4$, the two suggested methods being equivalent.

More generally, we can give a fairly explicit formula for $\bar{\bm P}$.  First, define the function $s:\Sigma/$$\sim$${}\mapsto\{0,1,\ldots,N\}$ by $s([\bm x]):=x_1+x_2+\cdots+x_N$; it counts the number of 1s in each element of an equivalence class and is clearly well defined.  Then
\begin{equation}\label{Pbar}
\bar{P}([\bm x],[\bm y])=\begin{cases}N^{-1}\big(\sum_{i:x_i=0}q_{m_i(\bm x)}+\sum_{i:x_i=1}p_{m_i(\bm x)}\big)&\text{if $[\bm y]=[\bm x]$}\\
N^{-1}\sum_{i:x_i=1,\bm x^i\sim\bm y}q_{m_i(\bm x)}&\text{if $s([\bm y])=s([\bm x])-1$}\\
N^{-1}\sum_{i:x_i=0,\bm x^i\sim\bm y}p_{m_i(\bm x)}&\text{if $s([\bm y])=s([\bm x])+1$}\\
0&\text{otherwise}\end{cases}
\end{equation}
for all $[\bm x],[\bm y]\in \Sigma/$$\sim$.  The first case in (\ref{Pbar}) follows because $\bm x^i\not\sim \bm x$ for all $i$.  Note also that, even if $s([\bm y])=s([\bm x])\pm1$, we may have $\bar{P}([\bm x],[\bm y])=0$; for example, $\bar{P}([001001],[010101])=0$.  We say ``fairly explicit'' because the evaluation of $\bar{\bm P}$ still requires an enumeration of $\Sigma/$$\sim$, which can be time consuming.

\section{The stationary distribution}

The unique stationary distribution $\bm\pi$ is too complicated to expect explicit formulas except for $3\le N\le 6$.  For $N=3$, we can find the unique stationary distribution for our eight-state chain by doing the same for a four-state chain.  An invariant measure for the four-state chain is $(\rho_0,3\rho_1,3\rho_2,\rho_3)$, where
$$
\rho_0:=q_0(q_1+q_2)q_3,\;
\rho_1:=p_0(q_1+q_2)q_3,\;
\rho_2:=p_0(p_1+p_2)q_3,\;
\rho_3:=p_0(p_1+p_2)p_3,
$$
so an invariant measure for the eight-state chain is $(\rho_0,\rho_1,\rho_1,\rho_2,\rho_1,\rho_2,\rho_2,\rho_3)$.  The stationary distribution follows by dividing each entry by the sum $\rho_0+3\rho_1+3\rho_2+\rho_3$.  Incidentally, the $N=3$ case is the only case for which the stationary distribution is reversible ($\bar{\bm\pi}$ in general, and $\bm\pi$ if $p_1=p_2$).  A partial explanation is that (\ref{Pbar_N=3}) is tridiagonal, that is, it corresponds to a birth-and-death chain.

For $N=4$ we can find the unique stationary distribution for our 16-state chain by doing the same for a six-state chain.  An invariant measure for the six-state chain is $(\rho_0,4\rho_1,4\rho_2,2\rho_{2'},4\rho_3,\rho_4)$, where
\begin{eqnarray*}
\rho_0&:=&q_0[2q_0q_3+(q_1+q_2)^2(q_0+p_3)]q_3,\\
\rho_1&:=&p_0[2q_0q_3+(q_1+q_2)^2(q_0+p_3)]q_3,\\
\rho_2&:=&p_0[2q_0q_3+(p_1+p_2)(q_1+q_2)(q_0+p_3)+(q_1+q_2)(p_3-q_0)]q_3\\
&\;=&p_0[2p_0p_3+(p_1+p_2)(q_1+q_2)(q_0+p_3)+(p_1+p_2)(q_0-p_3)]q_3,\\
\rho_{2'}&:=&p_0[2p_0q_3+(p_1+p_2)^2q_3+(q_1+q_2)^2p_0]q_3,\\
\rho_3&:=&p_0[2p_0p_3+(p_1+p_2)^2(q_0+p_3)]q_3,\\
\rho_4&:=&p_0[2p_0p_3+(p_1+p_2)^2(q_0+p_3)]p_3;
\end{eqnarray*}
here we have given two formulas for $\rho_2$ so that we can see at a glance that it is positive.  Thus, an invariant measure for the 16-state chain is $(\rho_0,\rho_1,\rho_1,\rho_2,\rho_1,\rho_{2'},\break\rho_2,\rho_3,\rho_1,\rho_2,\rho_{2'},\rho_3,\rho_2,\rho_3,\rho_3,\rho_4)$.
The stationary distribution follows by dividing each entry by the sum $\rho_0+4\rho_1+4\rho_2+2\rho_{2'}+4\rho_3+\rho_4$.

In a similar way, we have also found formulas for the unique stationary distribution in the cases $N=5$ and $N=6$ (assuming $p_1=p_2$ in the latter case), but they are considerably more complicated and consequently will not be given here.  In particular, we have not shown algebraically, as we have for $N=3$ and $N=4$, that each term is positive; for that we must rely on Markov chain theory.

Notice that, when $N=3$ or $N=4$, the stationary distribution depends on $p_1$ and $p_2$ only through $p_1+p_2$.  This property also holds when $N=5$ but fails when $N\ge6$.  When it holds, it implies that there is no loss of generality in assuming $p_1=p_2$.  The reason this property holds when $N=3$ or $N=4$ is that $\bar{\bm P}$ in (\ref{Pbar_N=3}) and (\ref{Pbar_N=4}) depends on $p_1$ and $p_2$ only through $p_1+p_2$.  The same is true when $N=5$ but not when $N=6$ because, for example, $\bar{P}([001011],[011011])=(1/6)p_1$.

\section{Strong law of large numbers}

Is there a strong law of large numbers (SLLN) and a central limit theorem for the sequence of profits by the ensemble of $N$ players?  More specifically, does Theorem~1 of Ethier and Lee \cite{EL} apply in this context?  Let us recall the statement of that theorem.

Consider an irreducible aperiodic Markov chain $\{X_n\}_{n\ge0}$ with finite state space $\Sigma_0$.  It evolves according to the one-step transition matrix ${\bm P}=(P_{ij})_{i,j\in\Sigma_0}$.  Let us denote its unique stationary distribution by the row vector ${\bm \pi}=(\pi_i)_{i\in \Sigma_0}$.  Let $w:\Sigma_0\times\Sigma_0\mapsto {\bf R}$ be an arbitrary function, which we write as a matrix ${\bm W}=(w(i,j))_{i,j\in\Sigma_0}$ and refer to as the \textit{payoff matrix}. Define the sequences $\{\xi_n\}_{n\ge1}$ and $\{S_n\}_{n\ge1}$ by
\begin{equation}\label{xi_n}
\xi_n:=w(X_{n-1},X_n),\qquad n\ge1,
\end{equation}
and
\begin{equation}\label{S_n}
S_n:=\xi_1+\cdots+\xi_n,\qquad n\ge1.
\end{equation}
Let ${\bm \Pi}$ denote the square matrix each of whose rows is ${\bm \pi}$, and let ${\bm Z}:=({\bm I}-({\bm P}-{\bm \Pi}))^{-1}$ denote the \textit{fundamental matrix}.  Denote by $\dot{\bm P}$ and $\ddot{\bm P}$ the Hadamard (entrywise) products $\bm P\circ\bm W$ and $\bm P\circ\bm W\circ\bm W$ (so $\dot{P}_{ij}:=P_{ij}w(i,j)$ and $\ddot{P}_{ij}:=P_{ij}w(i,j)^2$).  Let $\bm 1:=(1,1,\ldots,1)^\T$ and define
\begin{equation}\label{mu,sigma2}
\mu:=\bm\pi\dot{\bm P}\bm 1\quad{\rm and}\quad\sigma^2:=\bm\pi\ddot{\bm P}\bm 1
-(\bm\pi\dot{\bm P}\bm 1)^2+2\bm\pi\dot{\bm P}(\bm Z-\bm\Pi)\dot{\bm P}\bm 1.
\end{equation}

\begin{theorem}[Ethier and Lee \cite{EL}]
Under the above assumptions, and with the distribution of $X_0$ arbitrary, $\lim_{n\to\infty}n^{-1}\E[S_n]=\mu$,
$$
{S_n\over n}\to \mu\;\;{\rm a.s.},
$$
$\lim_{n\to\infty}n^{-1}\Var(S_n)=\sigma^2$, and, if $\sigma^2>0$,
$$
{S_n-n\mu\over\sqrt{n\sigma^2}}\to_d N(0,1).
$$
If $\mu=0$ and $\sigma^2>0$, then $-\infty=\liminf_{n\to\infty}S_n<\limsup_{n\to\infty}S_n=\infty$ \emph{a.s.}
\end{theorem}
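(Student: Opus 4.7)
My plan is to exploit the fundamental matrix $\bm Z$ to solve a Poisson equation and then decompose $S_n-n\mu$ as a martingale plus a bounded remainder; the four conclusions will follow from standard limit theorems for square-integrable martingales with uniformly bounded increments.

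For the mean and the SLLN, observe that $Y_n:=(X_{n-1},X_n)$ is itself an irreducible aperiodic finite-state Markov chain with stationary distribution $\tilde\pi_{ij}:=\pi_i P_{ij}$. Convergence to stationarity of $\{X_n\}$ yields $\E[\xi_n]\to\sum_{i,j}\pi_i P_{ij}w(i,j)=\bm\pi\dot{\bm P}\bm 1=\mu$, and Ces\`aro averaging gives $n^{-1}\E[S_n]\to\mu$. The ergodic theorem for $\{Y_n\}$ applied to the bounded observable $w$ then delivers $n^{-1}S_n\to\mu$ almost surely.

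For the variance formula and the CLT, set $g:=\dot{\bm P}\bm 1$, so that $\bm\pi(g-\mu\bm 1)=0$. The Poisson equation $(\bm I-\bm P)h=g-\mu\bm 1$ admits the solution $h:=\bm Z g-\mu\bm 1=(\bm Z-\bm\Pi)g$, using $\bm Z\bm 1=\bm 1$, $\bm\pi\bm Z=\bm\pi$, and $\bm\Pi g=\mu\bm 1$. Define
$$
D_k:=w(X_{k-1},X_k)-\mu+h(X_k)-h(X_{k-1}),\qquad k\ge1.
$$
A short calculation using $\bm P h=h-g+\mu\bm 1$ verifies $\E[D_k\mid X_0,\ldots,X_{k-1}]=0$, so $M_n:=D_1+\cdots+D_n$ is a martingale, and telescoping gives $S_n-n\mu=M_n+h(X_0)-h(X_n)$. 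Because $h$ is bounded on the finite state space $\Sigma_0$, the remainder is $O(1)$ and the asymptotics of $S_n-n\mu$ coincide with those of $M_n$. Expanding $\E_{\bm\pi}[D_1^2]$ under the stationary law and using the identity $\bm\pi h=0$ to cancel the $h$-squared contribution against a piece of the cross term, one obtains $\bm\pi\ddot{\bm P}\bm 1-\mu^2+2\bm\pi\dot{\bm P}h$; substituting $h=(\bm Z-\bm\Pi)g$ reproduces precisely the stated $\sigma^2$. The CLT then follows from the martingale CLT applied to $\{D_k\}$, whose increments are bounded and whose conditional second moments converge ergodically to $\sigma^2$.

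Finally, suppose $\mu=0$ and $\sigma^2>0$. Boundedness of $h$ reduces the problem to showing $\limsup_n M_n=+\infty$ and $\liminf_n M_n=-\infty$ almost surely. The predictable bracket $\langle M\rangle_n=\sum_{k=1}^n\E[D_k^2\mid\mathcal{F}_{k-1}]$ satisfies $n^{-1}\langle M\rangle_n\to\sigma^2>0$ a.s.\ by the ergodic theorem, so $\langle M\rangle_\infty=\infty$; a standard oscillation theorem (or the martingale LIL) for $L^2$-martingales with uniformly bounded increments and divergent bracket then yields the conclusion, which transfers to $S_n$ through the bounded remainder. I expect the main technical obstacle to be the algebraic reduction of $\E_{\bm\pi}[D_1^2]$ to the compact matrix form of $\sigma^2$: the cross term $2\E_{\bm\pi}[(w(X_0,X_1)-\mu)(h(X_1)-h(X_0))]$ and the $h$-squared term each produce several pieces that must be rearranged carefully via the identities $\bm Z\bm 1=\bm 1$, $\bm\pi\bm Z=\bm\pi$, $\bm\Pi\bm P=\bm\Pi$, and $\bm\pi h=0$.
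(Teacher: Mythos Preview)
The paper does not prove this theorem; it is quoted verbatim from the authors' earlier work \cite{EL} and used as a black box. Consequently there is no ``paper's own proof'' to compare against, and your proposal must be judged on its own terms.

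Your argument is correct and follows the classical Poisson-equation/martingale-approximation route. The identities you need hold exactly as stated: from $(\bm I-\bm P+\bm\Pi)\bm Z=\bm I$ one gets $(\bm I-\bm P)\bm Z=\bm I-\bm\Pi$ (using $\bm\Pi\bm Z=\bm\Pi$), so $h:=(\bm Z-\bm\Pi)g$ solves $(\bm I-\bm P)h=g-\mu\bm1$ and satisfies $\bm\pi h=0$. The algebra you flag as the ``main technical obstacle'' in fact closes cleanly: under $\bm\pi$, writing $\langle a,b\rangle_\pi:=\sum_i\pi_i a(i)b(i)$, one finds $\E_{\bm\pi}[(h(X_1)-h(X_0))^2]=2\langle h,g\rangle_\pi$ and $\E_{\bm\pi}[(\xi_1-\mu)(h(X_1)-h(X_0))]=\bm\pi\dot{\bm P}h-\langle h,g\rangle_\pi$, so the $\langle h,g\rangle_\pi$ terms cancel and $\E_{\bm\pi}[D_1^2]=\bm\pi\ddot{\bm P}\bm1-\mu^2+2\bm\pi\dot{\bm P}h=\sigma^2$.

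Two small points you should make explicit. First, the conclusion $n^{-1}\Var(S_n)\to\sigma^2$ is stated in the theorem but not argued in your proposal; it follows because $\{M_n\}$ is a genuine mean-zero martingale for any initial law, so $\Var(M_n)=\sum_{k=1}^n\E[D_k^2]$, and $\E[D_k^2]\to\E_{\bm\pi}[D_1^2]=\sigma^2$ by convergence to stationarity, while the bounded remainder $h(X_0)-h(X_n)$ contributes $O(\sqrt{n})$ to the covariance and hence $o(n)$ to the variance. Second, for the martingale CLT you need the conditional-variance convergence $n^{-1}\sum_{k=1}^n\E[D_k^2\mid\mathcal F_{k-1}]\to\sigma^2$ a.s.; this is indeed an ergodic average of a bounded function of $X_{k-1}$, so the Markov-chain ergodic theorem applies directly.
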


\begin{remark}
The abbreviation ``a.s.'' stands for ``almost surely,'' meaning ``with probability 1.''  The symbol $\to_d$ denotes convergence in distribution.
A game is \textit{winning} if $\mu>0$ (hence $\lim S_n=\infty$ a.s.), \textit{losing} if $\mu<0$ (hence $\lim S_n=-\infty$ a.s.), and \textit{fair} if $\mu=0$ (hence $-\infty=\liminf S_n<\limsup S_n=\infty$ a.s., assuming $\sigma^2>0$).
\end{remark}

It appears at first glance that the theorem does not apply in the present context because the payoffs are not completely specified by the one-step transitions of the Markov chain.  Specifically, a transition from  a state $\bm x$ to itself results whenever a loser loses or a winner wins, and the transition probability is given by (\ref{x to x}).  And yet (\ref{xi_n}) and (\ref{S_n}) suggest that $w(\bm x,\bm x)$ should be $\pm1$.  Does the theorem need to be generalized so that $w(i,j)$ is not simply the payoff when $X_{n-1}=i$ and $X_n=j$ but rather is the conditional expected payoff given $X_{n-1}=i$ and $X_n=j$?  Actually, it is more convenient to leave the theorem as it is and instead generalize the Markov chain.

Our original Markov chain has state space $\Sigma:=\{0,1\}^N$ and its one-step transition matrix $\bm P$ is given by (\ref{x to x^i}) and (\ref{x to x}).  Let $\bm\pi$ denote its unique stationary distribution.  We augment the state space, letting $\Sigma^*:=\Sigma\times \{1,2,\ldots,N\}$ and keeping track not only of the status of each player as described by $\bm x\in\Sigma$ but also of the label of the next player to play, say $i$.  The new one-step transition matrix $\bm P^*$ has the form
$$
P^*((\bm x,i),(\bm x^i,j)):=\begin{cases}N^{-1}p_{m_i(\bm x)}&\text{if $x_i=0$,}\\N^{-1}q_{m_i(\bm x)}&\text{if $x_i=1$,}\end{cases}\qquad (\bm x,i)\in\Sigma^*,\; j=1,2,\ldots,N,
$$
and
$$
P^*((\bm x,i),(\bm x,j)):=\begin{cases}N^{-1}q_{m_i(\bm x)}&\text{if $x_i=0$,}\\N^{-1}p_{m_i(\bm x)}&\text{if $x_i=1$,}\end{cases}\qquad (\bm x,i)\in\Sigma^*,\; j=1,2,\ldots,N,
$$
where $q_m:=1-p_m$ for $m=0,1,2,3$.  This remains an irreducible aperiodic Markov chain, and its unique stationary distribution $\bm\pi^*$ is given by $\pi^*(\bm x,i)=N^{-1}\pi(\bm x)$.  Further, the payoff matrix now has each nonzero entry equal to $\pm1$, so the theorem applies.

Therefore, by (\ref{mu,sigma2}), the mean parameter in the SLLN has the form
$$
\mu=\bm\pi^*\dot{\bm P}^*\bm1=\sum_{\bm x\in\Sigma}\pi(\bm x)\sum_{i=1}^N N^{-1}[p_{m_i(\bm x)}-q_{m_i(\bm x)}].
$$
Alternatively, this can be rewritten as
\begin{equation}\label{rule}
\mu=\bm\pi\dot{\bm P}\bm1=\bar{\bm\pi}\dot{\bar{\bm P}}{\bm1},
\end{equation}
where $\bm1$ is the column vector of 1s of the appropriate dimension and $\dot{\bm P}$ and $\dot{\bar{\bm P}}$ have new meanings.  Specifically, $\dot{\bm P}$ is obtained from $\bm P$, and $\dot{\bar{\bm P}}$ from $\bar{\bm P}$, by replacing each $q_m$ by $-q_m$.  This ``rule of thumb'' requires some caution:  It must be applied before any simplifications are made using $q_m=1-p_m$.  For example, the rule applies directly to (\ref{Pbar_N=3}) but not to (\ref{Pbar_N=4}) because the 1s in the $(1,1)$ and $(3,3)$ entries are $p_0+q_0$ and $p_3+q_3$, respectively, and the 2 in the $(2,2)$ entry is $p_1+q_1+p_2+q_2$.  (Recall that rows and columns are labeled by $0,1,2,2',3,4$.)

The main consequence of Theorem~1 from our perspective is the SLLN it ensures.  Let $S_n^A$, $S_n^B$, and $S_n^C$ denote the cumulative profit after $n$ turns to the ensemble of $N$ players playing game $A$, $B$, and $C:=\gamma A+(1-\gamma)B$.  Then
$$
{S_n^A\over n}\to \mu_A=2p-1\;\;{\rm a.s.},\qquad{S_n^B\over n}\to \mu_B\;\;{\rm a.s.},\qquad{S_n^C\over n}\to \mu_C\;\;{\rm a.s.}
$$
Of course, there is also a central limit theorem, but we will not try to evaluate its variance parameter.

We conclude this section with an application of the SLLN. Let us denote $\mu$ of (\ref{rule}) by $\mu(p_0,p_1,p_2,p_3)$ to emphasize its dependence on the probability parameters. (We do not assume that $p_1=p_2$.)

\begin{corollary}
With $q_m:=1-p_m$ for $m=0,1,2,3$, we have
$$
\mu(p_0,p_1,p_2,p_3)=-\mu(q_3,q_2,q_1,q_0).
$$
In particular, if $p_0+p_3=1$ and $p_1+p_2=1$, then $\mu(p_0,p_1,p_2,p_3)=0$.
\end{corollary}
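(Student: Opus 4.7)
The plan is to exploit the involution $\tau:\Sigma\to\Sigma$ defined by $\tau(\bm x):=(1-x_1,\ldots,1-x_N)$ (the swap of winners and losers), which intertwines the chain built from $(p_0,p_1,p_2,p_3)$ with the one built from $(q_3,q_2,q_1,q_0)$ while negating the per-round expected payoff. Two easy identities drive the argument: $m_i(\tau(\bm x))=2(1-x_{i-1})+(1-x_{i+1})=3-m_i(\bm x)$, and $\tau(\bm x^i)=\tau(\bm x)^i$.

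Write $\tilde{\bm P}$ for the matrix obtained from (\ref{x to x^i})--(\ref{x to x}) by replacing $(p_0,p_1,p_2,p_3)$ with $(\tilde p_0,\tilde p_1,\tilde p_2,\tilde p_3):=(q_3,q_2,q_1,q_0)$, so that $\tilde p_{3-m}=q_m$ and $\tilde q_{3-m}=p_m$. I would first verify the conjugation identity
$$
\tilde P(\tau(\bm x),\tau(\bm y))=P(\bm x,\bm y),\qquad \bm x,\bm y\in\Sigma.
$$
For $\bm y=\bm x^i$ with $x_i=0$, $(\tau(\bm x))_i=1$ gives $\tilde P(\tau(\bm x),\tau(\bm x)^i)=N^{-1}\tilde q_{m_i(\tau(\bm x))}=N^{-1}\tilde q_{3-m_i(\bm x)}=N^{-1}p_{m_i(\bm x)}=P(\bm x,\bm x^i)$; the case $x_i=1$ is analogous, and the diagonal case $\bm y=\bm x$ follows since both matrices are stochastic. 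Since $\tau$ is a bijection of $\Sigma$, the unique stationary distribution of $\tilde{\bm P}$ is then $\tilde{\bm\pi}=\bm\pi\circ\tau$.

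Next, I apply the identity $\mu(p_0,p_1,p_2,p_3)=N^{-1}\sum_{\bm x\in\Sigma}\pi(\bm x)\sum_{i=1}^N[p_{m_i(\bm x)}-q_{m_i(\bm x)}]$ (the display immediately preceding (\ref{rule}); the bracketed summand is the expected one-round payoff to player $i$ in state $\bm x$ and is the same whether $x_i=0$ or $x_i=1$) to the transformed chain to obtain
$$
\mu(q_3,q_2,q_1,q_0)=N^{-1}\sum_{\bm y\in\Sigma}\pi(\tau(\bm y))\sum_{i=1}^N[\tilde p_{m_i(\bm y)}-\tilde q_{m_i(\bm y)}].
$$
The substitution $\bm x:=\tau(\bm y)$ sends $m_i(\bm y)$ to $3-m_i(\bm x)$, so each bracket becomes $\tilde p_{3-m_i(\bm x)}-\tilde q_{3-m_i(\bm x)}=q_{m_i(\bm x)}-p_{m_i(\bm x)}$, and the whole expression collapses to $-\mu(p_0,p_1,p_2,p_3)$. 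For the final sentence of the corollary, the hypotheses $p_0+p_3=1$ and $p_1+p_2=1$ are precisely the equations $(q_3,q_2,q_1,q_0)=(p_0,p_1,p_2,p_3)$, forcing $\mu=-\mu$ and hence $\mu=0$.

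I do not anticipate any real obstacle; the whole proof is a symmetry argument. The one step meriting care is the case split verifying $\tilde P(\tau(\bm x),\tau(\bm y))=P(\bm x,\bm y)$, in which the roles of $p_m$ and $q_m$ are swapped via the index reflection $m\mapsto 3-m$.
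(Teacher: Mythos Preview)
Your proof is correct and rests on the same symmetry the paper uses, the bit-flip involution (called $\eta$ there) that intertwines the $(p_0,p_1,p_2,p_3)$-chain with the $(q_3,q_2,q_1,q_0)$-chain.  The difference is only in execution: the paper realizes this as a pathwise coupling---same sequence of players, same coin tosses with outcomes read oppositely---so that the cumulative profits satisfy $S_n^{p_0,p_1,p_2,p_3}=-S_n^{q_3,q_2,q_1,q_0}$ identically, and then invokes the SLLN (Theorem~1) to pass to the means.  You instead verify the conjugation $\tilde P(\tau(\bm x),\tau(\bm y))=P(\bm x,\bm y)$ at the level of transition matrices, deduce $\tilde{\bm\pi}=\bm\pi\circ\tau$, and plug into the explicit formula for $\mu$ preceding~(\ref{rule}).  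Your route is slightly more self-contained (no appeal to the SLLN is needed), while the paper's coupling makes the ``winner becomes loser'' interpretation transparent at the sample-path level; the two are otherwise of comparable length and difficulty.
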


\begin{proof}
We prove this via a coupling argument.  Define the function $\eta:\Sigma\mapsto\Sigma$ by $\eta(\bm x):=(1-x_1,1-x_2,\ldots,1-x_N)$.  Let $\{\bm X(n)\}$ be the Markov chain in $\Sigma$ with initial state $\bm x$ and one-step transition matrix $\bm P^{p_0,p_1,p_2,p_3}$ given by (\ref{x to x^i}) and (\ref{x to x}), where the superscripts are merely intended to emphasize the probability parameters.  Then $\bm X'(n):=\eta(\bm X(n))$ defines a Markov chain $\{\bm X'(n)\}$ in $\Sigma$ with initial state $\bm x':=\eta(\bm x)$ and one-step transition matrix $\bm P^{q_3,q_2,q_1,q_0}$.  The two processes are coupled, using the same sequence of players and the same sequence of coin tosses.

To help clarify this, let us consider the special case $N=3$, in which there are three players 1, 2, and 3 whose collective status is described by $\{\bm X(n)\}$ and three players $1'$, $2'$, and $3'$ whose collective status is described by $\{\bm X'(n)\}$.  Suppose $\bm X(0)=\bm x:=(0,1,0)$ and $\bm X'(0)=\bm x':=\eta(\bm x)=(1,0,1)$.  If player 2 is chosen to play (and determine $\bm X(1)$), then player $2'$ will also be chosen to play (and determine $\bm X'(1)$).  Player 2 is required to toss a $p_0$-coin because $0=(0\,0)_2$ and the first probability parameter of $\{\bm X(n)\}$ is $p_0$, and player $2'$ is required to toss a $q_0$-coin because $3=(1\,1)_2$ and the fourth probability parameter of $\{\bm X'(n)\}$ is $q_0$.  We can use the same coin toss for both players, except that if player 2 sees heads, then player $2'$ sees tails, and vice versa.  So a loss by player 2 (equivalently, a win by player $2'$) results in $\bm X(1)=(0,0,0)$ and $\bm X'(1)=(1,1,1)$, whereas a win by player 2 (equivalently, a loss by player $2'$) results in $\bm X(1)=\bm x$ and $\bm X'(1)=\bm x'$.  The coupling proceeds in this manner at each turn.

Let $S_n^{p_0,p_1,p_2,p_3}$ be the cumulative profit to the ensemble of $N$ players after $n$ turns, where again the superscripts emphasize the probability parameters.  Then, by the SLLN,
\begin{eqnarray*}
\mu(p_0,p_1,p_2,p_3)&=&\lim_{n\to\infty}n^{-1}S_n^{p_0,p_1,p_2,p_3}\\
&=&-\lim_{n\to\infty}n^{-1}S_n^{q_3,q_2,q_1,q_0}=-\mu(q_3,q_2,q_1,q_0)\;\;\text{a.s.},
\end{eqnarray*}
where the middle equality holds because each win in the $\{\bm X(n)\}$ process corresponds to a loss in the $\{\bm X'(n)\}$ process and vice versa.
\end{proof}

\section{Reducible cases}\label{reducible}

We have assumed that $0<p_m<1$ for $m=0,1,2,3$, which ensures that our Markov chain is irreducible and aperiodic.  Can we weaken this assumption?  Let us consider several cases.  We denote by $\bm0\in\Sigma$ the state consisting of all 0s, and by $\bm1\in\Sigma$ the state consisting of all 1s.\medskip

1. Suppose $p_0=1$ and $0<p_m<1$ for $m=1,2,3$, as Toral \cite{T} originally assumed.  Then state $\bm 0$ cannot be reached from $\Sigma-\{\bm 0\}$ and $\bm P$, with row $\bm 0$ and column $\bm 0$ deleted, is a stochastic matrix that is irreducible and aperiodic.  Note that Lemma~1 is applicable with $S:=\Sigma-\{\bm 0\}$.  Theorem~1 also applies with $\Sigma_0:=S$, or we could take $\Sigma_0:=\Sigma$; it does not matter because $\bm P$ will have a unique stationary distribution $\bm\pi$, which necessarily satisfies $\pi(\bm0)=0$.  The reason we might prefer $\Sigma_0:=\Sigma$ is that any formula obtained assuming $0<p_m<1$ for $m=0,1,2,3$ will remain valid after substituting $p_0=1$.\medskip

2. Suppose $p_0=0$ and $0<p_m<1$ for $m=1,2,3$.  Then state $\bm 0$ is absorbing, and absorption eventually occurs with probability 1.  Hence $S_n^B-S_{n-1}^B=-1$ for all $n$ sufficiently large, so $\mu_B=-1$, as noticed by Xie et al.\ \cite{Xetal}.\medskip

3. Suppose $p_3=0$ and $0<p_m<1$ for $m=0,1,2$.  This is analogous to case 1.\medskip

4. Suppose $p_3=1$ and $0<p_m<1$ for $m=0,1,2$.  This is analogous to case 2, except $\mu_B=1$.\medskip

5. Suppose $p_0=1$, $p_3=0$, and $0<p_m<1$ for $m=1,2$.  Then states $\bm 0$ and $\bm 1$ cannot be reached from $\Sigma-\{\bm 0,\bm 1\}$ and $\bm P$, with rows $\bm 0$ and $\bm 1$ and columns $\bm 0$ and $\bm 1$ deleted, is a stochastic matrix.  However, it is not irreducible (unless $N=3$), so Lemma~1 does not apply with $S:=\Sigma-\{\bm0,\bm1\}$.  If $N$ is even, then the two states $0101\cdots01$ and $1010\cdots10$ in which 0s and 1s alternate are absorbing, and from either state there is a win of one unit with probability 1/2 and a loss of one unit with probability 1/2.  Consequently, $\mu_B=0$, regardless of $p_1$ and $p_2$.\medskip

6. Suppose $p_0=0$, $p_3=1$, and $0<p_m<1$ for $m=1,2$.  Then both $\bm 0$ and $\bm 1$ are absorbing, and absorption occurs with probability 1.  The probability of absorption at $\bm 1$ depends on the initial state (or equivalence class), and can be calculated for small $N\ge3$.   For example, in the case $N=4$ with $p_1=p_2$, we can derive formulas for $\mu_B$ as a function of $p_1$, depending on the equivalence class of the initial state, and the results are consistent with the simulations of Xie et al.\ \cite{Xetal};  compare their Fig.\ II.1 (p.\ 410).  The details are left to the reader.

\section{The Parrondo region}\label{Parrondo}

If we denote $\mu$ of (\ref{rule}) by $\mu(p_0,p_1,p_2,p_3)$ to emphasize its dependence on the probability parameters, then the mean profits per turn for game $A$, game $B$, and game $C:=\gamma A+(1-\gamma)B$ are $\mu_A:=\mu(p,p,p,p)=2p-1$, $\mu_B:=\mu(p_0,p_1,p_2,p_3)$, and $\mu_C:=\mu(r_0,r_1,r_2,r_3)$, where $r_m:=\gamma p+(1-\gamma)p_m$ for $m=0,1,2,3$.  The \textit{Parrondo effect} is said to be present if $\mu_A\le0$, $\mu_B\le0$, and $\mu_C>0$.  The \textit{anti-Parrondo effect} is said to be present if $\mu_A\ge0$, $\mu_B\ge0$, and $\mu_C<0$.

In what follows we assume for convenience that $p=1/2$ (game $A$ is fair), $p_1=p_2$ (the bias of the coin tossed in game $B$ depends only on the \textit{number} of winners among the two nearest neighbors), and $\gamma=1/2$ (the random mixture of games $A$ and $B$ is the equally weighted one).  In particular, for fixed $N\ge3$, we have three free probability parameters, $p_0$, $p_1$, and $p_3$, so our parameter space is the unit cube $(0,1)^3:=(0,1)\times(0,1)\times(0,1)$.  With caution (see Section~\ref{reducible}), we can also include parts of the boundary.  Of interest are the regions in the parameter space in which the Parrondo effect (i.e., $\mu_B\le0$ and $\mu_C>0$) appears and the anti-Parrondo effect (i.e., $\mu_B\ge0$ and $\mu_C<0$) appears;  let us refer to them as the \textit{Parrondo region} and \textit{anti-Parrondo region}.

\begin{theorem}
Fix $N\ge3$ and assume as above that $p=1/2$ in game $A$, $p_1=p_2$ in games $B$ and $C$, and $\gamma=1/2$ in game $C$.  With $q_m:=1-p_m$ for $m=0,1,3$, the parameter vector $(p_0,p_1,p_3)$ belongs to the Parrondo region if and only if the parameter vector $(q_3,q_1,q_0)$ belongs to the anti-Parrondo region.  In particular, the Parrondo region and the anti-Parrondo region have the same volume.
\end{theorem}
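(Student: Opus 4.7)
The plan is to reduce the theorem directly to Corollary~1, using the involution $T(p_0,p_1,p_3) := (q_3,q_1,q_0) = (1-p_3,\,1-p_1,\,1-p_0)$ on the unit cube $(0,1)^3$. Since $p=1/2$, game~$A$ is fair, so $\mu_A = 0$, which means the conditions $\mu_A\le 0$ and $\mu_A\ge 0$ in the definitions of the Parrondo and anti-Parrondo effects are both automatic. Thus it suffices to show that $\mu_B$ and $\mu_C$ each change sign under $T$, and then to observe that $T$ preserves Lebesgue measure.

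For $\mu_B$, I would simply apply Corollary~1 with $p_2$ set equal to $p_1$: since $\mu_B(p_0,p_1,p_3) = \mu(p_0,p_1,p_1,p_3)$, the corollary gives
\[
\mu_B(p_0,p_1,p_3) = -\mu(q_3,q_1,q_1,q_0) = -\mu_B(q_3,q_1,q_0),
\]
so $\mu_B(p_0,p_1,p_3)\le 0$ if and only if $\mu_B(q_3,q_1,q_0)\ge 0$.

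For $\mu_C$ the same idea works once I track how the parameters $r_m = (1/2)(1/2) + (1/2)p_m = 1/4 + p_m/2$ transform under $T$. Writing $\tilde r_m$ for the analogous quantities built from $(q_3,q_1,q_0)$, a short calculation shows that $\tilde r_0 = 1 - r_3$, $\tilde r_1 = 1 - r_1$, and $\tilde r_3 = 1 - r_0$; that is, applying $T$ to $(p_0,p_1,p_3)$ and then passing to the mixture parameters is the same as applying $T$ directly to $(r_0,r_1,r_3)$. Hence another application of Corollary~1 gives $\mu_C(q_3,q_1,q_0) = -\mu_C(p_0,p_1,p_3)$, and combining with the conclusion for $\mu_B$ yields the claimed equivalence of the Parrondo and anti-Parrondo conditions under $T$.

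Finally, $T$ is an affine involution of $(0,1)^3$ whose linear part is a signed permutation matrix, so $|\det T'| = 1$ and $T$ preserves volume; since $T$ biject the Parrondo region onto the anti-Parrondo region, the two regions have equal volume. I do not anticipate any real obstacle: the only point that requires care is verifying the identity $\tilde r_m = 1 - r_{3-m}$ (so that Corollary~1 can be invoked a second time), but this is a one-line check using $\gamma = 1/2$ and $p = 1/2$.
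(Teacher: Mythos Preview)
Your proposal is correct and follows essentially the same route as the paper: both apply Corollary~1 twice (once to $\mu_B$ and once to the mixture parameters $r_m=(1/2+p_m)/2$ for $\mu_C$) and then note that the involution $(p_0,p_1,p_3)\mapsto(1-p_3,1-p_1,1-p_0)$ has Jacobian of absolute value~$1$. Your explicit verification that $\tilde r_m = 1-r_{3-m}$ is exactly the computation the paper performs implicitly when it writes $\mu_C=-\mu((1/2+q_3)/2,\ldots,(1/2+q_0)/2)=-\mu_C^*$.
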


\begin{proof}
Let $\mu_B$ and $\mu_C$ denote the means for the parameter vector $(p_0,p_1,p_1,p_3)$, and let $\mu_B^*$ and $\mu_C^*$ denote the means for the parameter vector $(q_3,q_1,q_1,q_0)$.  Then, by Corollary~1 (and using $p_1=p_2$),
$\mu_B=\mu(p_0,p_1,p_1,p_3)=-\mu(q_3,q_1,q_1,q_0)=-\mu_B^*$ and
$\mu_C=\mu((1/2+p_0)/2,(1/2+p_1)/2,(1/2+p_1)/2,(1/2+p_3)/2)
=-\mu((1/2+q_3)/2,(1/2+q_1)/2,(1/2+q_1)/2,(1/2+q_0)/2)=-\mu_C^*$.
Therefore, $\mu_B\le0$ and $\mu_C>0$ if and only if $\mu_B^*\ge0$ and $\mu_C^*<0$.

For the second conclusion, we define the mapping $\Lambda:(0,1)^3\mapsto(0,1)^3$ by
$\Lambda(p_0,p_1,\break p_3)=(1-p_3,1-p_1,1-p_0)$.
This one-to-one transformation has Jacobian identically equal to 1, so it is measure preserving.  Since it maps the Parrondo region onto the anti-Parrondo region, the two regions must have the same volume.
\end{proof}

It will therefore suffice to focus our attention in what follows on the Parrondo region.

\subsection{$N=3$}

Using the stationary distribution derived above and the mean formula (\ref{rule}), together with the assumption that $p_1=p_2$, we find that
\begin{equation}\label{meanB_N=3,p1=p2}
\mu_B={p_1 (p_0 + q_3) - q_3\over p_0 p_1 + 2 p_0 q_3 + q_1 q_3}.
\end{equation}
Since $p=1/2$ in game $A$ and $\gamma=1/2$ in game $C$, by (\ref{meanB_N=3,p1=p2}) with $p_m$ replaced by $(1/2+p_m)/2$ for $m=0,1,3$, we have
$$
\mu_C={2 p_1 (1 + p_0 + q_3) - q_0 - 3 q_3\over 1 + 3p_0 + 2p_0 p_1 + 4p_0 q_3 + 2 p_1 p_3 + 2 q_1 + 5 q_3}.
$$
Therefore, the Parrondo region is described by
$p_1 (p_0 + q_3) - q_3\le 0$ and $2 p_1 (1 + p_0 + q_3) - q_0 - 3 q_3>0$
or, equivalently,
\begin{equation}\label{Parrondo_N=3}
{q_0+3q_3\over2(1+p_0+q_3)}<p_1\le{q_3\over p_0+q_3}.
\end{equation}
The first inequality is equivalent to (24) in \MR\ \cite{MR}.
There exists such a $p_1$ if and only if $\min(p_0,q_0)<p_3<\max(p_0,q_0)$.  In particular, the area of the region in the $(p_0,p_3)$ unit square for which there exists a $p_1$ satisfying (\ref{Parrondo_N=3}) is equal to 1/2.

With the parameter space being the $(p_0,p_3,p_1)$ unit cube, the Parrondo region is the union of two connected components.  See Figure~\ref{fig}.  Two straightforward iterated integrals yield its exact volume, $(9\ln9-8\ln8-3)/8\approx0.0174361$.

\begin{figure}[ht]
\centering
\includegraphics[width = 2.2in]{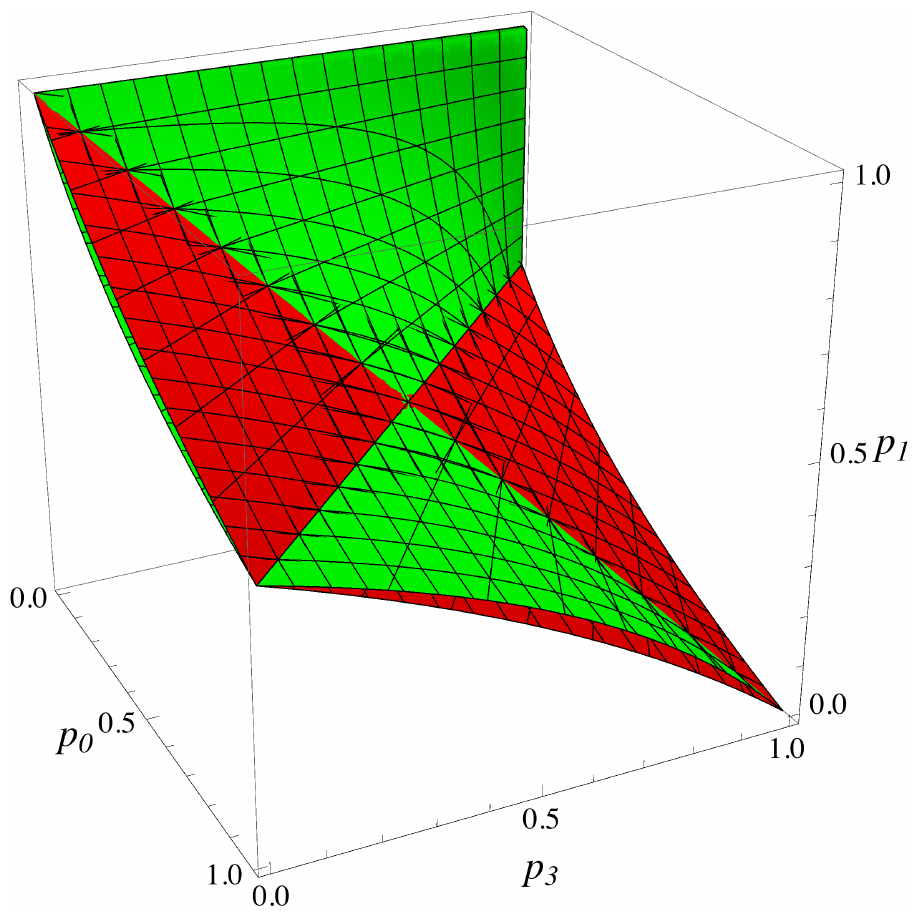}
\includegraphics[width = 2.2in]{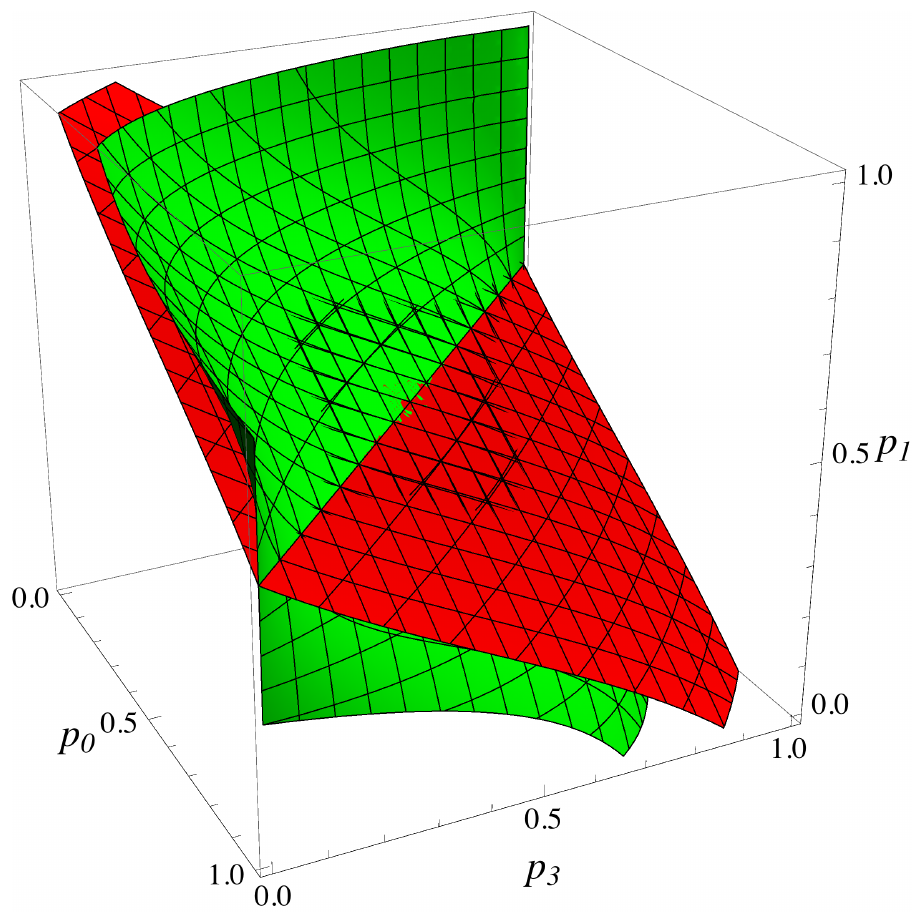}\par
\includegraphics[width = 2.2in]{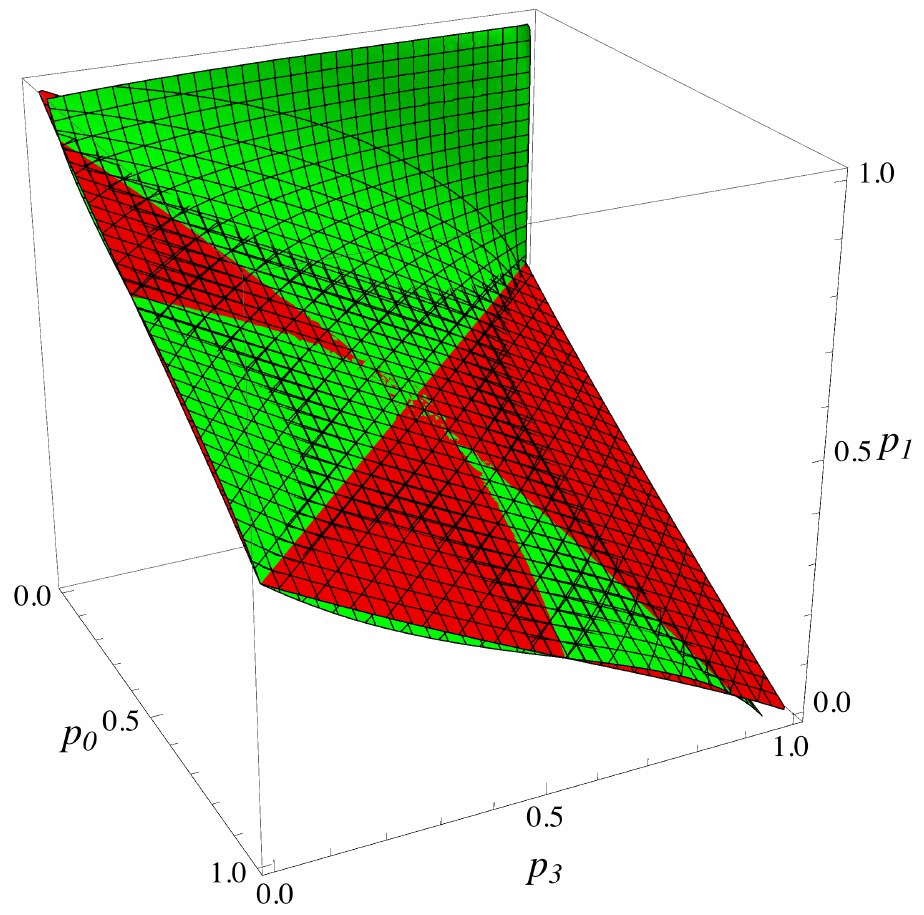}
\includegraphics[width = 2.2in]{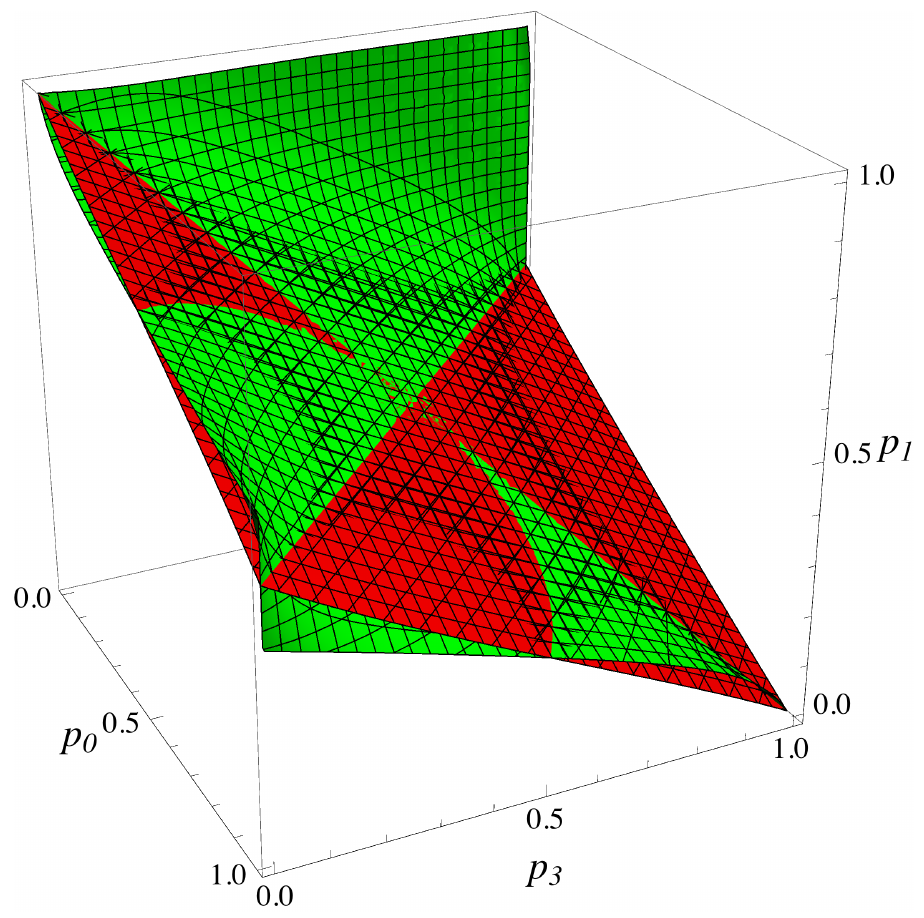}
\caption{\label{fig}When $N=3$ (upper left), $N=4$ (upper right), $N=5$ (lower left), or $N=6$ (lower right), the green (or light) surface is the surface $\mu_B=0$, and the red (or dark) surface is the surface $\mu_C=0$, both in the $(p_0,p_3,p_1)$ unit cube.  The Parrondo region is the region on or below the green surface and above the red surface, while the anti-Parrondo region is the region on or above the green surface and below the red surface.  (Assumptions: $p=1/2$ in game $A$, $p_1=p_2$ in games $B$ and $C$, and $\gamma=1/2$ in game $C$.)}
\end{figure}
\afterpage{\clearpage}

Toral's \cite{T} games were inspired by the one-player history-dependent games introduced by Parrondo, Harmer, and Abbott \cite{PHA}; in their game $B$, the player tosses a $p_0$-coin, a $p_1$-coin, a $p_2$-coin, or a $p_3$-coin if his two previous results are loss-loss, loss-win, win-loss, or win-win, respectively, with the second result being the more recent one.  There is an interesting relationship between Toral's three-player game $B$ and the one-player history-dependent game $B$.

For $i=1,2,3$, let ${\bm P}_i$ denote the $8\times8$ one-step transition matrix for the Markov chain in $\Sigma$ with $N=3$ corresponding to player $i$ being chosen to play at each turn (as usual, the coin tossed depends on the status of the nearest neighbors).  Then the one-step transition matrix ${\bm P}$ for Toral's game $B$ with $N=3$ can be defined as
${\bm P} := (1/3)({\bm P}_1 + {\bm P}_2 + {\bm P}_3)$.  It can be shown that the stationary distribution of ${\bm P}$ coincides with the stationary distribution of ${\bm P}_1{\bm P}_2{\bm P}_3$ when $p_1=p_2$.  The one-step transition matrix ${\bm P}_1{\bm P}_2{\bm P}_3$ corresponds to  the game in which the three players 1, 2, and 3 toss coins in the order named, and this game can be coupled with the one-player history-dependent game, using the same sequence of coin tosses, because the two most recent players are also the two nearest neighbors.

The result is that the mean profit $\mu_B$ in (\ref{meanB_N=3,p1=p2}) is the same as the mean profit per turn for the one-player history-dependent Parrondo game $B$, assuming $p_1=p_2$.  This then implies that the Parrondo regions are also the same, that is, the Parrondo region for the one-player history-dependent games is given by (\ref{Parrondo_N=3}).

\subsection{$N=4$}

Again we have an explicit formula for $\mu_B$ (algebraically equivalent to that of Xie et al.\ \cite{Xetal}), from which we can derive a similar formula for $\mu_C$.  Specifically, $\mu_B=\mu(p_0,p_1,p_1,p_3)$, where
$$
\mu(p_0,p_1,p_1,p_3):={f_0(p_0,p_3) +
 4 (1 + p_0) (q_0 + p_3) q_3 p_1 - 2 (q_0 + p_3) (q_0 - p_3) p_1^2\over g_0(p_0,p_1,p_3)}
$$
with $f_0(p_0,p_3):=-(3 - 2 p_3 - 3 p_0^2 + 2 p_0 p_3 - p_3^2 + 2 p_0^2 p_3 - 2 p_0 p_3^2)$ and $g_0(p_0,p_1,p_3):=(3 + 6 p_0 - 2 p_3 - 3 p_0^2 - 2 p_0 p_3 - p_3^2 + 12 p_0^2 p_3 - 4 p_0 p_3^2 - 8 p_0^2 p_3^2) - 4 (q_0 + p_3 + 2 p_0^2 + 2 p_0 p_3) q_3 p_1 + 2 (1 + 4 p_0 - p_0^2 - 2 p_0 p_3 - p_3^2) p_1^2$, and $\mu_C=\mu((1/2+p_0)/2,(1/2+p_1)/2,(1/2+p_1)/2,(1/2+p_3)/2)$.
The condition for the Parrondo effect amounts to two quadratic inequalities in $p_1$ with polynomial coefficients in $p_0$ and $p_3$.  Solving these inequalities and assuming $0<p_m<1$ for $m=0,1,3$, we find that the Parrondo region is described by $p_0+p_3<1$,
$$
p_1\le{(1 + p_0)  q_3 - \sqrt{ (1 + p_0)^2 q_3^2 + (q_0 - p_3)f(p_0,p_3)}\over q_0 - p_3}
$$
if $(1 + p_0)^2 q_3^2 + (q_0 - p_3)f(p_0,p_3)\ge0$,
where $f(p_0,p_3):=[p_0 (3 p_0 - 2 p_3 - 2 p_0 p_3 + 2 p_3^2) - (3+p_3)q_3]/[2 (q_0 + p_3)]$, and
$$
p_1> {g(p_0,p_3) - \sqrt{g(p_0,p_3)^2 + 4 (q_0 - p_3) h(p_0,p_3)}\over 4 (q_0 - p_3)},
$$
where $g(p_0,p_3):=13 + 8 p_0 - 8 p_3 - 4 p_0 p_3$ and
$h(p_0,p_3):=(-48 + 14 p_0 + 30 p_3 + 13 p_0^2 - 8 p_0 p_3 + 3 p_3^2 - 4 p_0^2 p_3 + 4 p_0 p_3^2)/(1 + q_0 + p_3)$.  We note that $p_1>1/2$ in the Parrondo region.  See Figure~\ref{fig}.  The region is connected, and a numerical integration yields 0.0293350 as its approximate volume.

\subsection{$N=5,6$}

When $N=5$ or $N=6$, we have explicit, albeit very complicated, formulas for $\mu_B$ from which we can derive similar formulas for $\mu_C$; in the case $N=6$, our formula assumes that $p_1=p_2$ for simplicity.  It would be impractical and uninformative to state those formulas here.  Instead, see Figure~\ref{fig}, which shows that the Parrondo regions in the two cases are surprisingly similar.  In both cases they appear to be the union of three connected components.

There is a subtle technical issue here.  We can solve the quadratic inequalities in the $N=4$ case, but the quartic and octic inequalities in the $N=5$ and $N=6$ cases are less tractable.  We expect that $\mu_B<0$ below the surface $\mu_B=0$ and $\mu_C>0$ above the surface $\mu_C=0$, but we do not have a proof.

\subsection{$7\le N\le 19$}

When $N\ge7$, we no longer have explicit formulas for $\mu_B$ but we can in principle compute it exactly for arbitrary values of the probability parameters by enumerating the state space $\Sigma/$$\sim$ (i.e., the set of all equivalence classes) and calculating the one-step transition matrix $\bar{\bm P}$ and the related column vector $\dot{\bar{\bm P}}\bm1$ as functions of $p_m$ and $q_m$ (as if $p_m$ and $q_m$ were unrelated) for $m=0,1,2,3$.  We then specify the desired numerical values of the probability parameters $p_m$ (and set $q_m:=1-p_m$) and evaluate the unique stationary distribution $\bar{\bm\pi}$.  Finally, we use the rightmost expression in (\ref{rule}) for $\mu_B$.  The advantage of this approach is that, once $\bar{\bm P}$ and $\dot{\bar{\bm P}}\bm1$ are found, we can apply (\ref{rule}) for arbitrary choices of the probability parameters without the time-consuming re-enumeration of the equivalence classes.

Computations were done on a MacBook Air with 2 GB of RAM using \textit{Mathematica 8}.  The \textit{Mathematica} program we used is displayed in the Appendix for $N=10$; it is also available at \url{http://www.math.utah.edu/~ethier/program.txt} or \url{http://yu.ac.kr/~leejy/program.txt}.  For $N\ge17$, we needed more memory.  Computations were done on an IBM System x3850 X5 with 1 TB of RAM using a Linux version of \textit{Mathematica 8}.  The runtime for the case $N=20$ was estimated at over seven weeks, so we stopped with $N=19$.

\begin{figure}[ht]
\centering
\includegraphics[width = 2.2in]{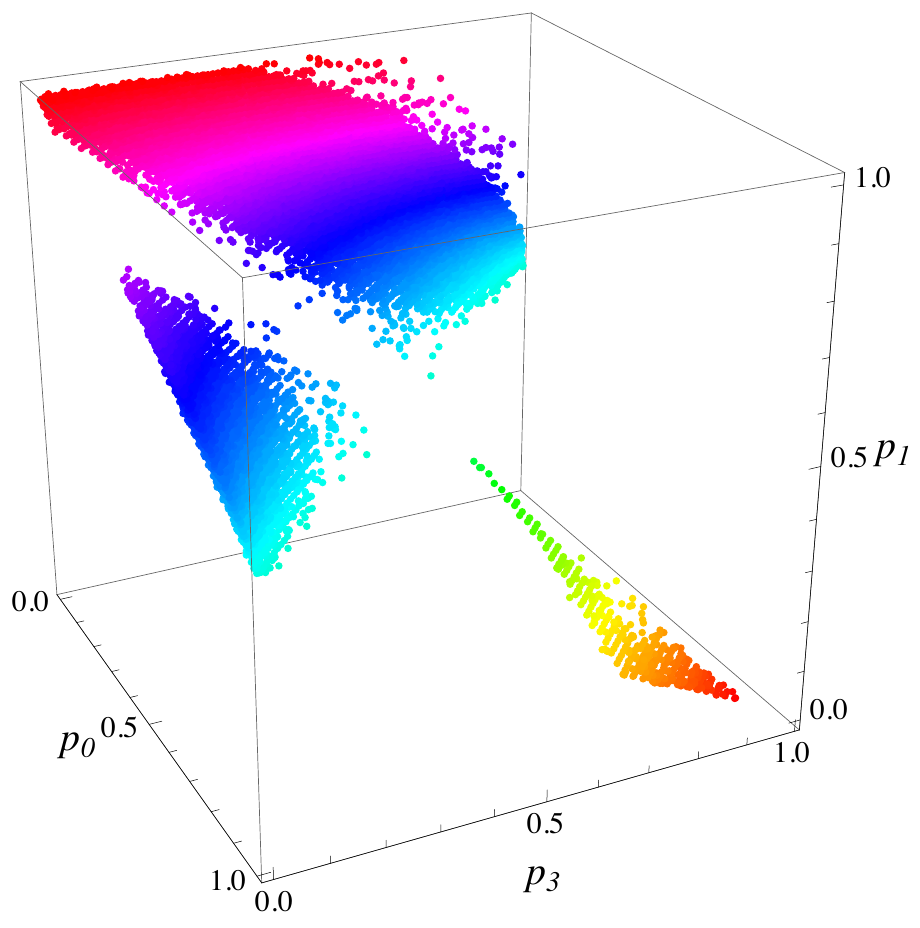}
\includegraphics[width = 2.2in]{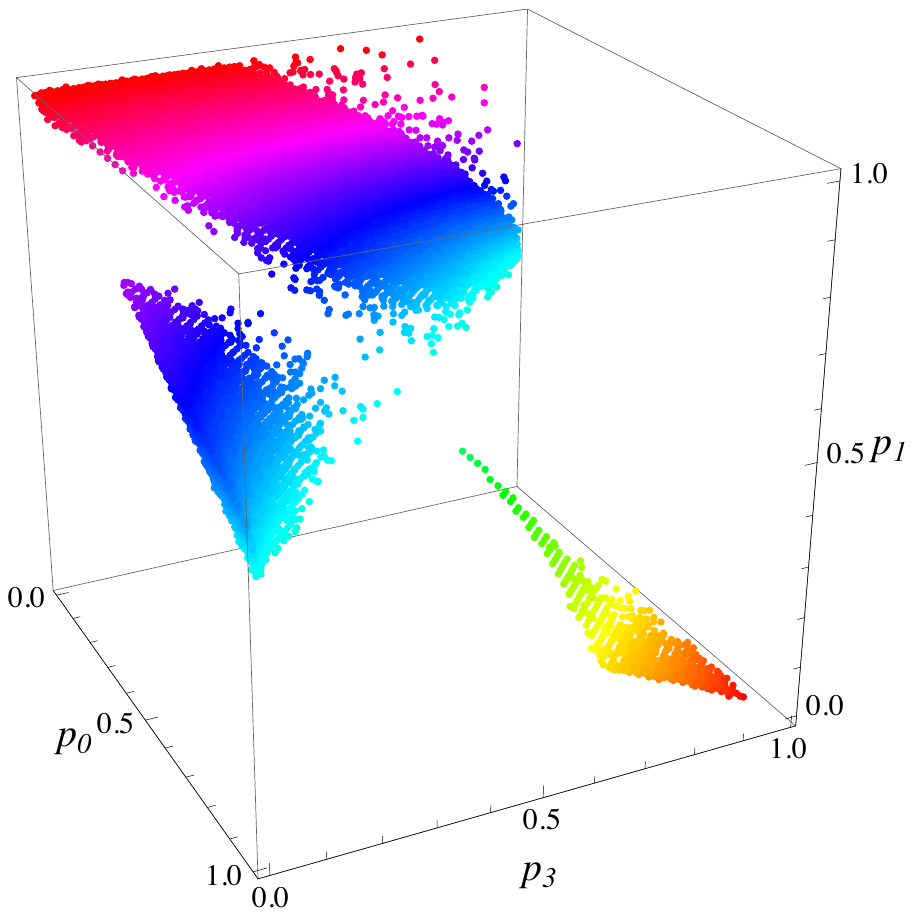}
\includegraphics[width = 0.32in]{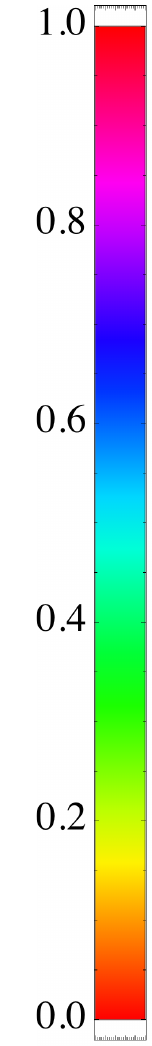}
\caption{\label{fig2}When $N=7$ (left) or $N=8$ (right), the Parrondo region is sketched by plotting points that belong to it.  Two million points were tested for each figure, one million equally spaced points and one million randomly chosen points.  Because this figure lacks the three-dimensionality of Figure~\ref{fig}, we use color (or shading) as a partial substitute.  The point at $(p_0,p_3,p_1)$ is colored using the \textit{Mathematica} function Hue[$p_1$].  As $h$ varies from 0 to 1, the color corresponding to Hue[$h$] runs through red, yellow, green, cyan, blue, magenta, and back to red again.  See the color scale on the right.  (Assumptions: See the caption to Figure~\ref{fig}.)}
\end{figure}

\subsection{Comparison}

Table~\ref{volume} summarizes our estimates of the volume of the Parrondo region.  For $3\le N\le13$ we used two methods for estimating this volume.  The first, which might be called the Riemann sum approximation, consists of evaluating $\mu_B$ and $\mu_C$ at each of the points $(2i+1,2j+1,2k+1)/200$ for $i,j,k=0,1,\ldots,99$, and determining the proportion of such points at which the Parrondo effect is present.  (This is just the Riemann sum for the indicator function of the Parrondo region.)  The second method is by simulation.  Our simulation estimate is also based on $10^6$ points but they are chosen randomly from the uniform distribution over the unit cube.  The Riemann sum approximation has the advantage of being repeatable, whereas the simulation estimate allows estimation of the error in the approximation.

\begin{table}[ht]
\caption{\label{volume}Estimated volume of the Parrondo region.  The integrated volume is obtained by integration or numerical integration and is rounded to six significant digits.  For the Riemann sum approximation, the unit cube is divided into $(100)^3$ cubes of equal size, then the estimate is the proportion of cubes for which the Parrondo effect is present at the center point.  The simulation estimate $\hat p$ is also based on $n=10^6$ points but here they are randomly chosen.  The standard error is the square root of $\hat{p}(1-\hat{p})/n$.    (Assumptions: See the caption to Figure~\ref{fig}.)\medskip}
\catcode`@=\active \def@{\hphantom{0}}
\catcode`#=\active \def#{\hphantom{$-$}}
\begin{center}
\begin{footnotesize}
\begin{tabular}{ccccc}
\hline
\noalign{\smallskip}
 $N$ & integrated &  Riemann sum  & simulation & standard \\
     &  volume    & approximation & estimate   &  error of   \\
     &            & to volume     & of volume  & simulation  \\
\noalign{\smallskip}
\hline
\noalign{\smallskip}
@3 & 0.0174361 & 0.017314 & 0.017336   &  0.0001305    \\
@4 & 0.0293350 & 0.029199 & 0.029227   &  0.0001684    \\
@5 &           & 0.011275 & 0.011521   &  0.0001067    \\
@6 &           & 0.010751 & 0.011090   &  0.0001047    \\
@7 &           & 0.008327 & 0.008671   &  0.0000927    \\
@8 &           & 0.007781 & 0.008028   &  0.0000892    \\
@9 &           & 0.007060 & 0.007372   &  0.0000855    \\
10 &           & 0.006776 & 0.006952   &  0.0000831    \\
11 &           & 0.006491 & 0.006791   &  0.0000821    \\
12 &           & 0.006356 & 0.006622   &  0.0000811    \\
13 &           & 0.006227 & 0.006492   &  0.0000803    \\
\noalign{\smallskip}
\hline
\end{tabular}
\end{footnotesize}
\end{center}
\end{table}

The volume of the Parrondo region appears to be getting smaller as $N$ increases.  We expect that it will converge to a nonzero limit, based not on Table~\ref{volume} but on Tables \ref{Toral} and \ref{two points} as we now explain.

\MR\ \cite{MR} studied the presence of the Parrondo effect in the case of Toral's choice of the probability parameters, namely $p=1/2$, $p_0=1$, $p_1=p_2=4/25$, and $p_3=7/10$ (actually, they took $p=0.499$).  They obtained estimates of $\mu_B$ and $\mu_C$ using exact computations for $N=3$, unspecified analytic methods for $4\le N\le 12$, and simulation for $N>12$.  Table~\ref{Toral} confirms their findings using exact computations of $\mu_B$ and $\mu_C$.  For example, $\mu_B=-599823882743/31695346763173$ when $N=6$, which we round to six significant digits in Table~\ref{Toral}.  Notice that $\mu_C$ seems to have stabilized to six significant digits by $N=11$, so it appears that \textit{all} of the variation in $\mu_C$ in the upper graph of Figure~2 of \cite{MR} is due to experimental error.  However, $\mu_B$ is more oscillatory.  It seems to have stabilized to three significant digits by $N=19$, so it appears that \textit{most} of the variation in $\mu_B$ in the lower graph of the same figure is due to experimental error.  This explains why we have not used simulation to estimate $\mu_B$ and $\mu_C$ for $N\ge20$.

Table~\ref{two points} analyzes two other cases, a second point on the boundary of the unit cube and a point in the interior.  In both cases $\mu_B$ seems to have stabilized more quickly than in the case of Toral's choice of the probability parameters.

\begin{table}[ht]
\caption{\label{Toral}Analysis of the Parrondo effect for Toral's choice of the probability parameters.  Ellipses are intended to suggest that exact numbers have been truncated at six digits.  The last two columns are rounded to six significant digits.  (Assumptions: See the caption to Figure~\ref{fig}.)\medskip}
\catcode`@=\active \def@{\hphantom{0}}
\catcode`#=\active \def#{\hphantom{$-$}}
\begin{center}
\begin{footnotesize}
\begin{tabular}{cccc}
\hline
\noalign{\smallskip}
 $N$ & Parrondo $p_1$-interval & $\qquad\mu_B$@@ & $\qquad\mu_C$@@ \\
     & when $(p_0,p_3)=(1,7/10)$ & \multicolumn{2}{c}{at $(p_0,p_1,p_3)=(1,4/25,7/10)$} \\
\noalign{\smallskip}
\hline
\noalign{\smallskip}
@3 & $(0.195651\cdots,0.230769\cdots]$ & $-0.0909091$@@ & $-0.0183774$@ \\
@4 & --- empty ---                     & #0.0799608@@   & #0.0171357@ \\
@5 & $(0.150762\cdots,0.162596\cdots]$ & $-0.00219465$@ & #0.00405176 \\
@6 & $(0.149365\cdots,0.178102\cdots]$ & $-0.0189247$@@ & #0.00463310 \\
@7 & $(0.148884\cdots,0.155594\cdots]$ & #0.00350598@   & #0.00482261 \\
@8 & $(0.148968\cdots,0.159157\cdots]$ & #0.000698188   & #0.00479021 \\
@9 & $(0.148967\cdots,0.162158\cdots]$ & $-0.00189233$@ & #0.00479036 \\
10 & $(0.148966\cdots,0.160394\cdots]$ & $-0.000332809$ & #0.00479099 \\
11 & $(0.148966\cdots,0.160550\cdots]$ & $-0.000466527$ & #0.00479089 \\
12 & $(0.148966\cdots,0.160793\cdots]$ & $-0.000676916$ & #0.00479089 \\
13 & $(0.148966\cdots,0.160662\cdots]$ & $-0.000562901$ & #0.00479089 \\
14 & $(0.148966\cdots,0.160669\cdots]$ & $-0.000569340$ & #0.00479089 \\
15 & $(0.148966\cdots,0.160689\cdots]$ & $-0.000586184$ & #0.00479089 \\
16 & $(0.148966\cdots,0.160680\cdots]$ & $-0.000578161$ & #0.00479089 \\
17 & $(0.148966\cdots,0.160680\cdots]$ & $-0.000578345$ & #0.00479089 \\
18 & $(0.148966\cdots,0.160681\cdots]$ & $-0.000579652$ & #0.00479089 \\
19 & $(0.148966\cdots,0.160681\cdots]$ & $-0.000579095$ & #0.00479089 \\
\noalign{\smallskip}
\hline
\end{tabular}
\end{footnotesize}
\end{center}
\end{table}

\begin{table}[ht]
\caption{\label{two points}Analysis of the Parrondo effect for a second point on the boundary of the unit cube and for a point in the interior.  (Assumptions: See the caption to Figure~\ref{fig}.)\medskip}
\catcode`@=\active \def@{\hphantom{0}}
\catcode`#=\active \def#{\hphantom{$-$}}
\begin{center}
\begin{footnotesize}
\begin{tabular}{cccc}
\hline
\noalign{\smallskip}
 $N$ & Parrondo $p_1$-interval & $\qquad\mu_B$@@ & $\qquad\mu_C$@@ \\
     & when $(p_0,p_3)=(7/10,0)$ & \multicolumn{2}{c}{at $(p_0,p_1,p_3)=(7/10,17/25,0)$} \\
\noalign{\smallskip}
\hline
\noalign{\smallskip}
@3 & --- empty ---                     & #0.0710383@     & #0.0297791@ \\
@4 & $(0.672790\cdots,0.807540\cdots]$ & $-0.0425713$@   & #0.00241457 \\
@5 & $(0.657367\cdots,0.675341\cdots]$ & #0.00257895     & #0.00818232 \\
@6 & $(0.659797\cdots,0.699307\cdots]$ & $-0.0102930$@   & #0.00721881 \\
@7 & $(0.659410\cdots,0.694010\cdots]$ & $-0.00722622$   & #0.00736816 \\
@8 & $(0.659472\cdots,0.695419\cdots]$ & $-0.00808338$   & #0.00734464 \\
@9 & $(0.659462\cdots,0.695052\cdots]$ & $-0.00784318$   & #0.00734835 \\
10 & $(0.659463\cdots,0.695147\cdots]$ & $-0.00790952$   & #0.00734776 \\
11 & $(0.659463\cdots,0.695122\cdots]$ & $-0.00789119$   & #0.00734786 \\
12 & $(0.659463\cdots,0.695129\cdots]$ & $-0.00789624$   & #0.00734784 \\
13 & $(0.659463\cdots,0.695127\cdots]$ & $-0.00789485$   & #0.00734784 \\
14 & $(0.659463\cdots,0.695128\cdots]$ & $-0.00789523$   & #0.00734784 \\
15 & $(0.659463\cdots,0.695127\cdots]$ & $-0.00789513$   & #0.00734784 \\
16 & $(0.659463\cdots,0.695127\cdots]$ & $-0.00789516$   & #0.00734784 \\
17 & $(0.659463\cdots,0.695127\cdots]$ & $-0.00789515$   & #0.00734784 \\
18 & $(0.659463\cdots,0.695127\cdots]$ & $-0.00789515$   & #0.00734784 \\
19 & $(0.659463\cdots,0.695127\cdots]$ & $-0.00789515$   & #0.00734784 \\
\noalign{\smallskip}
\hline
\noalign{\smallskip}
 $N$ & Parrondo $p_1$-interval & $\qquad\mu_B$@@ & $\qquad\mu_C$@@ \\
     & when $(p_0,p_3)=(1/10,3/4)$ & \multicolumn{2}{c}{at $(p_0,p_1,p_3)=(1/10,3/5,3/4)$} \\
\noalign{\smallskip}
\hline
\noalign{\smallskip}
@3 & $(0.611111\cdots,0.714285\cdots]$ & $-0.190476$@@  & $-0.00671141$ \\ %-0.00671140939597315
@4 & $(0.584416\cdots,0.640975\cdots]$ & $-0.0858189$@  & #0.0108365@ \\ % 0.0108364505803864
@5 & $(0.580262\cdots,0.616548\cdots]$ & $-0.0389980$@  & #0.0141217@ \\ % 0.0141216599946462
@6 & $(0.579542\cdots,0.607387\cdots]$ & $-0.0183165$@  & #0.0147166@ \\ % 0.0147165804481463
@7 & $(0.579415\cdots,0.603644\cdots]$ & $-0.00924232$  & #0.0148223@ \\ % 0.0148222659908303
@8 & $(0.579393\cdots,0.602063\cdots]$ & $-0.00528548$  & #0.0148408@ \\ % 0.0148408286659049
@9 & $(0.579390\cdots,0.601387\cdots]$ & $-0.00356984$  & #0.0148441@ \\ % 0.0148440692696135
10 & $(0.579389\cdots,0.601097\cdots]$ & $-0.00282963$  & #0.0148446@ \\ % 0.0148446332172394
11 & $(0.579389\cdots,0.600973\cdots]$ & $-0.00251155$  & #0.0148447@ \\ % 0.0148447311992370
12 & $(0.579389\cdots,0.600920\cdots]$ & $-0.00237531$  & #0.0148447@ \\ % 0.0148447482087657
13 & $(0.579389\cdots,0.600897\cdots]$ & $-0.00231709$  & #0.0148448@ \\ % 0.0148447511603327
14 & $(0.579389\cdots,0.600887\cdots]$ & $-0.00229226$  & #0.0148448@ \\ % 0.0148447516723886
15 & $(0.579389\cdots,0.600883\cdots]$ & $-0.00228169$  & #0.0148448@ \\ % 0.0148447517612131
16 & $(0.579389\cdots,0.600881\cdots]$ & $-0.00227719$  & #0.0148448@ \\ % 0.0148447517766202
17 & $(0.579389\cdots,0.600881\cdots]$ & $-0.00227528$  & #0.0148448@ \\ % 0.0148447517792926
18 & $(0.579389\cdots,0.600880\cdots]$ & $-0.00227446$  & #0.0148448@ \\ % 0.0148447517797561
19 & $(0.579389\cdots,0.600880\cdots]$ & $-0.00227412$  & #0.0148448@ \\ % 0.0148447517798365
\noalign{\smallskip}
\hline
\end{tabular}
\end{footnotesize}
\end{center}
\end{table}
\afterpage{\clearpage}

\section{Conclusions}

We considered the spatially dependent Parrondo games of Toral \cite{T}, which assume $N\ge3$ players arranged in a circle, and in which the win probability for a player depends on the status of the player's two nearest neighbors.  There are three games, game $A$ without spatial dependence, game $B$ with spatial dependence, and the randomly mixed game $C:=(1/2)(A+B)$.  The model is described by a parameterized Markov chain with $2^N$ states.  To maximize the value of $N$ for which exact computations are feasible, we regarded states as equivalent if they are equal after a rotation and/or reflection of the players.  This allowed us to compute the mean profits per turn, $\mu_B$  and $\mu_C$, to the ensemble of $N$ players for $3\le N\le19$ and several choices of the parameter vector $(p_0,p_1,p_3)$, including Toral's choice.  The results provide numerical evidence, but not a proof, that $\mu_B$ and $\mu_C$ converge as $N\to\infty$ and that the Parrondo effect (i.e., $\mu_B\le0$ and $\mu_C>0$) persists for all $N$ sufficiently large for a set of parameter vectors having nonzero volume.  This suggests that the spatially-dependent version of Parrondo's paradox is a robust phenomenon that remains present in the thermodynamic limit.

This is the main conclusion, but there are several other noteworthy conclusions.  We have shown that the sequence of profits to the ensemble of $N$ players obeys the strong law of large numbers.  This is important in defining what is meant by a winning, losing, and fair game.  For $3\le N\le 6$ explicit formulas for $\mu_B$ and $\mu_C$ are available, so with the help of computer graphics, we have demonstrated that one can visualize the Parrondo region, the region in the three-dimensional parameter space in which the Parrondo effect appears.  There is also an anti-Parrondo region, and we have shown that it is symmetric with the Parrondo region, as might be expected.  Finally, we have pointed out a close relationship between the spatially dependent Parrondo games in the case of $N=3$ players and the one-player history-dependent Parrondo games.

We have restricted our attention to the one-dimensional version of the model, but our methods may have applicability to the two-dimensional version, already investigated by Mihailovi\'c and Rajkovi\'c \cite{MR3} using computer simulation.  There are also, of course, several problems for mathematicians, including a proof that $\mu_B$ and $\mu_C$ actually do converge as $N\to\infty$.  We have partial results in this direction, which relate the Markov chain model to an interacting particle system, or spin system (Liggett \cite{L}, Chapter 3).  Spin systems are continuous-time Markov processes in the infinite-product space $\{0,1\}^S$, where $S$ is a countable set such as the $d$-dimensional integer lattice.  Some of the best-known spin systems (e.g., the stochastic Ising model) were motivated by physics.  It is interesting that two mathematical models (Parrondo games and spin systems), with rather different physical motivations, should come together in this way.

\section*{Acknowledgments}

We thank an anonymous referee for conjecturing the symmetry between the Parrondo and anti-Parrondo regions, now confirmed in Theorem~2.

The work of S. N. Ethier was partially supported by a grant from the Simons Foundation (209632).  It was also supported by a Korean Federation of Science and Technology Societies grant funded by the Korean Government (MEST, Basic Research Promotion Fund).  The work of J. Lee was supported by a Yeungnam University research grant from 2009.

\section*{Appendix}
\begin{figure}[ht]
\centering
\includegraphics[width = 4.8in]{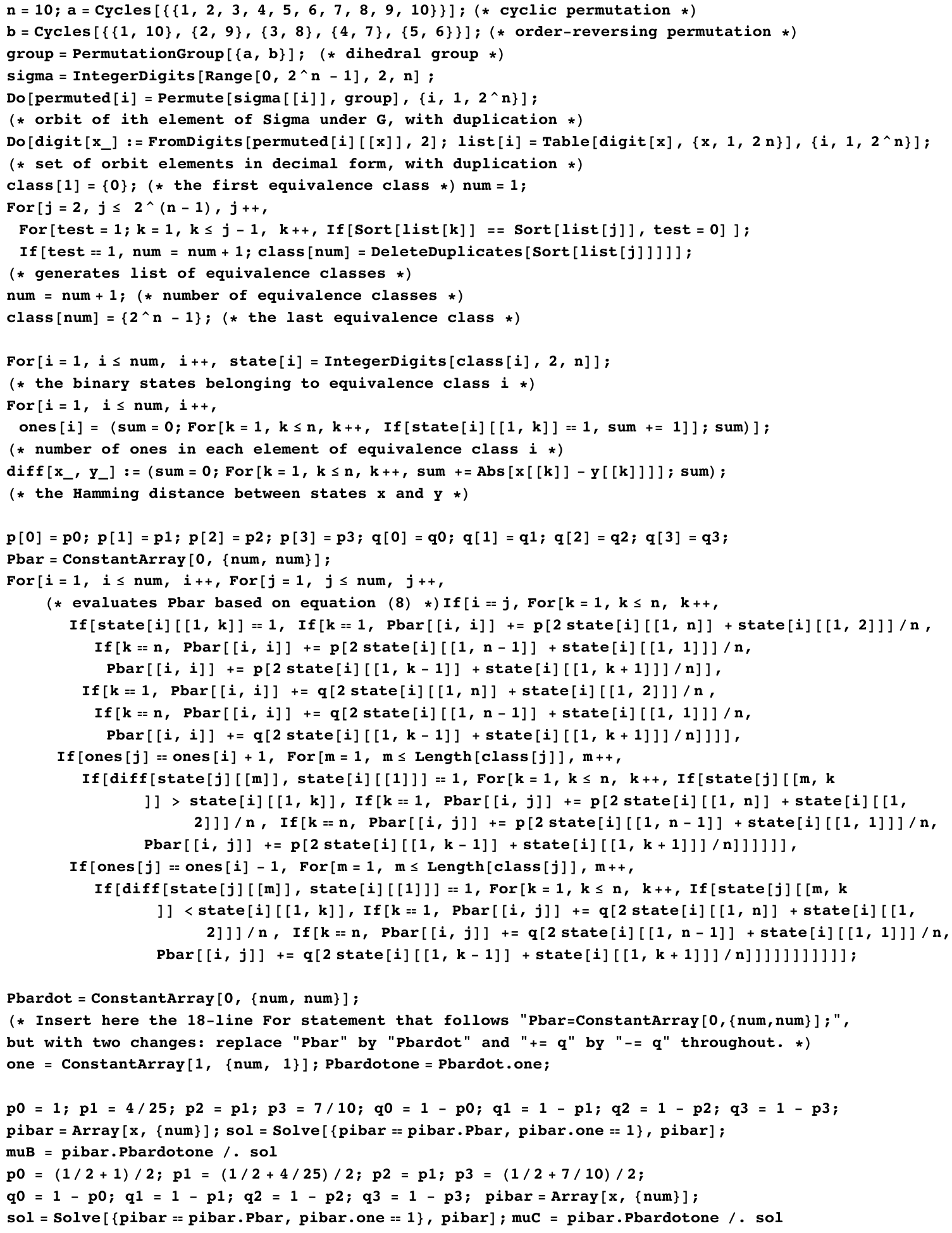}
\caption{\label{fig3}The \textit{Mathematica} program used to enumerate the equivalence classes, compute $\bar{\bm P}$ and $\dot{\bar{\bm P}}\bm1$, solve for the stationary distribution $\bar{\bm\pi}$, and evaluate $\mu_B$ and $\mu_C$.  Here $N=10$ and $p_1=p_2$.  But $N$ can be changed by modifying the first three instructions, and the assumption $p_1=p_2$ can be dropped by eliminating \texttt{b} in line 3 and changing \texttt{2n} to \texttt{n} in line 7.}
\end{figure}

\end{document}